\newcommand{\CF}{{\mathcal {F}}}
\newcommand{\CS}{{\mathcal {S}}}
\newcommand{\RD}{{\mathrm {D}}}
\newcommand{\RF}{{\mathrm {F}}}
\newcommand{\RP}{{\mathrm {P}}}
\newcommand{\GL}{{\mathrm{GL}}}
\newcommand{\Hom}{{\mathrm{Hom}}}
\newcommand{\Ind}{{\mathrm{Ind}}}
\newcommand{\SL}{{\mathrm{SL}}}
\newcommand{\od}{\operatorname{d}}
\newcommand{\oL}{\operatorname{L}}
\newcommand{\oM}{\operatorname{M}}
\newcommand{\oZ}{\operatorname{Z}}
\newcommand{\Z}{\mathbb{Z}}
\newcommand{\C}{\mathbb{C}}
\newcommand{\R}{\mathbb R}
\newcommand{\abs}[1]{\lvert#1\rvert}
\newcommand{\la}{\langle}
\newcommand{\ra}{\rangle}
\newcommand{\be}{\begin {equation}}
\newcommand{\ee}{\end {equation}}
\newcommand{\bee}{\begin {equation*}}
\newcommand{\eee}{\end {equation*}}
\newcommand{\cf}{\emph{cf.}~}
\theoremstyle{Theorem}
\theoremstyle{Theorem}
\newtheorem{thm}{Theorem}[section]
\newtheorem{cort}[thm]{Corollary}
\newtheorem{lemt}[thm]{Lemma}
\newtheorem{prpt}[thm]{Proposition}
\theoremstyle{Theorem}
\theoremstyle{Theorem}
\theoremstyle{Plain}
\theoremstyle{remark}
\newtheorem*{example}{Example}
\theoremstyle{remark}
\newtheorem*{rremark}{Remark}
\theoremstyle{Definition}
\begin{document}

\title{Godement-Jacquet L-functions and  full theta lifts}

\author[Y. Fang]{Yingjue Fang}

\address{College of Mathematics and Statistics, Shenzhen University, Shenzhen, 518060, China}
\email{joyfang@szu.edu.cn}

\author[B. Sun]{Binyong Sun}
\address{Academy of Mathematics and Systems Science, Chinese Academy of
Sciences \& University of Chinese Academy of Sciences,  Beijing, 100190, China} \email{sun@math.ac.cn}

\author [H.Xue] {Huajian Xue}
\address{Beijing International Center for Mathematical Research\\
Peking University\\
Beijing, 100871,  China} \email{xuehuajian@126.com}

\subjclass[2000]{22E50} \keywords{Godement-Jacquet L-function, theta lift}

%\thanks{J.L.'s research was partially supported by RGC-GRF grant 16303314 of HKSAR}
%\thanks{Supported by NSFC Grant 11222101}

\begin{abstract}
We relate poles of  local Godement-Jacquet L-functions to distributions on  matrix spaces  with singular supports. As an application, we show the irreducibility of the full theta lifts to $\GL_n(\RF)$ of generic irreducible representations of $\GL_n(\RF)$, where $\RF$ is an   arbitrary local field.

%Let $\RD$ be a finite dimensional central division algebra over a local field $\RF$. A relation is established between poles of  Godement-Jacquet L-functions for $\GL_n(\RD)$  and  distributions on the matrix space $\oM_n(\RD)$  with  support in  the set of %singular matrices. As an application, we show that the full theta lifts to $\GL_n(\RF)$ of generic irreducible representations of $\GL_n(\RF)$ are irreducible.

%Let $\RD$ be a finite dimensional central division algebra over a local field $\RF$. Let $\sigma$ be an irreducible admissible smooth representation of $\GL_n(\RD)$.
%We use Godement-Jacquet L-functions to give a sufficient condition for the irreducibility of the full theta lift of $\pi$ to $\GL_n(\RD)$. In particular, we show that if $\RD=\RF$ and $\pi$ is generic, then the full theta lift is irreducible.
\end{abstract}

 \maketitle

%\tableofcontents

\section{Introduction}

Let $\RF$ be a local field and let $\RD$ be a central division algebra over $\RF$ of finite dimension $d^2$ ($d\geq 1$).  Fix an integer $n\geq 1$. As usual, let $\oM_{n}(\RD)$ denote the space of $n\times n$ matrices with coefficients in $\RD$. Put
\[
  G:=\GL_n(\RD)\subset \oM_{n}(\RD).
\]
Write $\mathcal S$ for the space of Schwartz or  Bruhat-Schwartz functions on $\oM_{ n}(\RD)$, when $\RF$ is respectively archimedean or non-archimedean.   View it
as  a representation of $G\times G$ by the action
\be\label{actgg}
   ((g,h). \phi)(x):=\abs{\det(g^{-1}h)}_\RF^{\frac{dn}{2}}\phi(g^{-1} xh),\quad g,h\in G,\, \phi\in \CS,\, x\in  \oM_{n}(\RD).
\ee
Here ``$\det$" stands for the reduced norm on $\oM_{n}(\RD)$, and ``$\abs{\,\cdot\, }_\RF$" stands for the normalized absolute value on $\RF$. Write $G_1$ for the subgroup $G\times\{1\}$ of $G\times G$, and likewise
write $G_2$ for the subgroup $\{1\}\times G$ of $G\times G$. When no confusion is possible, we will  identify these two groups with $G$.

Let $\sigma$ be an irreducible admissible smooth representation of $G$. By an ``admissible smooth representation", we mean a Casselman-Wallach representation when $\RF$ is archimedean, and
 a smooth representation of finite length when $\RF$ is non-archimedean. The reader may consult \cite{Ca}, \cite[Chapter 11]{Wa2} or \cite{BK} for details about Casselman-Wallach representations.

 Define the full theta lift  of $\sigma$  by
 \be\label{theta1}
  \Theta_1(\sigma):= (\CS\widehat \otimes \sigma^\vee)_{G_1},
 \ee
 which is a representation of $G_2$ and is also viewed as a representation of $G$ via the identification $G\cong G_2$.
 Here ``$\widehat \otimes$" denotes the completed projective tensor product in the archimedean case, and the algebraic tensor product in the non-archimedean case;
  a superscript ``$\,^\vee$" indicates the contragredient representation;  $\sigma^\vee$ is viewed as a representation of $G_1$ via the identification $G_1\cong G$;
 and a
 subscript group indicates the maximal (Hausdorff in the archimedean case) quotient on which the group acts trivially.

Similar to \eqref{theta1}, view $\sigma$ as a representation of $G_2$ and define
 \be\label{theta2}
  \Theta_2(\sigma):= (\CS\widehat \otimes \sigma^\vee)_{G_2},
 \ee
 which is a representation of $G$. The following proposition is well known. See \cite{howe}, \cite{Ku} and \cite{MVW}, for examples.
 \begin{prpt}\label{ftfl}
 Both $\Theta_1(\sigma)$ and    $\Theta_2(\sigma)$ are  admissible smooth representations of $G$.
 \end{prpt}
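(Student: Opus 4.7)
By the involution $x\mapsto {}^t x$ on $\oM_{n}(\RD)$, which swaps the roles of $G_1$ and $G_2$, it suffices to establish the statement for $\Theta_1(\sigma)$. Smoothness is immediate, since $\Theta_1(\sigma)$ is a quotient of the smooth $G_2$-representation $\CS\widehat\otimes \sigma^\vee$. The content of the proposition is therefore admissibility: finite length together with smoothness in the non-archimedean case, and the Casselman--Wallach property in the archimedean case.

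The plan is to exploit the rank stratification of $\oM_{n}(\RD)$. For $0\le r\le n$, let $O_r\subset \oM_{n}(\RD)$ be the set of matrices of reduced rank $r$, a single $(G_1\times G_2)$-orbit; the open subsets $U_r:=\{x:\rank(x)\ge r\}$ form a descending chain $U_0=\oM_{n}(\RD)\supset\cdots\supset U_n=G$. This induces, via extension by zero in the non-archimedean case and the Bruhat extension theorem in the archimedean case, a $(G_1\times G_2)$-equivariant filtration of $\CS$ whose successive quotients are the Schwartz spaces $\CS(O_r)$ (further filtered by the order of transverse jets in the archimedean setting). Applying the right-exact functor of $G_1$-coinvariants produces a corresponding filtration of $\Theta_1(\sigma)$.

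Two computations drive the proof. On the open stratum $O_n=G$ the $G_1$-action is simply transitive, and the map
\[
\phi\otimes\xi\;\longmapsto\;\int_G\phi(g)\,\abs{\det g}_\RF^{dn/2}\,\sigma^\vee(g^{-1})\xi\,dg
\]
yields an isomorphism $(\CS(G)\widehat\otimes\sigma^\vee)_{G_1}\cong \sigma^\vee$ of $G_2$-representations. On each lower stratum $O_r$ with $r<n$, the stabilizer in $G_1\times G_2$ of a rank-$r$ representative is a parabolic-type subgroup whose Levi involves blocks of the form $\GL_r(\RD)$ and $\GL_{n-r}(\RD)$; a Mackey-style computation then identifies the corresponding contribution to $\Theta_1(\sigma)$ with a parabolic induction from a (twisted) Jacquet module of $\sigma^\vee$. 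Both sorts of contributions are admissible smooth, since Jacquet functors preserve admissibility (Jacquet's theorem, or Casselman's theorem in the archimedean case) and parabolic induction preserves the admissible-smooth / Casselman--Wallach property; finite iterated extensions of such representations remain admissible smooth, giving the conclusion.

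The main obstacle lies in the archimedean case: one must carefully define the Schwartz sections along each stratum, verify Bruhat extension at the correct topological level, and ensure that the resulting (countable) filtration of $\Theta_1(\sigma)$ has Casselman--Wallach subquotients whose iterated extensions again satisfy the Casselman--Wallach property. In the non-archimedean case the argument reduces to purely algebraic Jacquet-module computations and is essentially immediate.
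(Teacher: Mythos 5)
The paper itself offers no proof of this proposition: it is quoted as well known, with references to Howe, Kudla and MVW (Howe for the archimedean case, Kudla/MVW for the non-archimedean one). Measured against those sources, your non-archimedean argument is essentially the standard Kudla--MVW one: the finite rank stratification of $\oM_n(\RD)$, exact extension-by-zero sequences, and a Mackey-type computation identifying the contribution of each stratum to $\Theta_1(\sigma)$ with a parabolic induction of a Jacquet module of $\sigma^\vee$; since Jacquet functors and parabolic induction preserve finite length, this part is fine in outline. One repairable slip: the opening reduction via $x\mapsto {}^t x$ does not work for a general central division algebra, because $\oM_n(\RD)$ admits an anti-automorphism only when $\RD\cong \RD^{\mathrm{op}}$ (i.e.\ $\RD$ is split or quaternionic). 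The paper swaps the two actions by the Fourier transform attached to the reduced trace pairing (its Lemma \ref{fourier}); alternatively, one simply repeats the stratification argument with the roles of $G_1$ and $G_2$ exchanged, since the strata and the computation are symmetric.

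The genuine gap is the archimedean case, which is exactly the nontrivial content of the proposition and which your sketch defers rather than proves. There $\CS(U_r)/\CS(U_{r+1})$ is not $\CS(O_r)$ but carries a \emph{countably infinite} decreasing filtration by transverse jets, and the coinvariant functor $(\cdot)_{G_1}$ (Hausdorff quotient) is only right exact. Consequently your closing step, ``finite iterated extensions of admissible (Casselman--Wallach) representations remain admissible,'' does not apply: you would need to show, for instance, that all but finitely many jet-graded pieces have vanishing $G_1$-coinvariants (say by an infinitesimal character or growth argument), and to control Hausdorffness and the failure of exactness when passing from the graded pieces back to $\Theta_1(\sigma)$ itself. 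None of this is addressed, and it is precisely where the difficulty lies; the reference the paper leans on for this case (Howe's \emph{Transcending classical invariant theory}) establishes admissibility and finite generation of the maximal Howe quotient by a different mechanism --- joint harmonics and bounds on $K$-multiplicities --- rather than by the orbit stratification. As written, the archimedean half of your proposal is a plan with its decisive step missing, so the proposition is not yet proved.
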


It is also  well known that $\Theta_1(\sigma)$ has a unique irreducible quotient, which is isomorphic to $\sigma^\vee$, and likewise $\Theta_2(\sigma)$ has a unique irreducible quotient, which is also isomorphic to $\sigma^\vee$  (\cf  \cite[Th\'eor\`eme 1]{mi}). This assertion is equivalently formulated as in the following theorem.

\begin{thm}\label{howe}
Let $\sigma, \sigma'$ be  irreducible admissible smooth representations of $G$. Then
\[
  \dim \Hom_{G\times G}(\CS, \sigma\widehat \otimes \sigma')=\left \{
                                                                                               \begin{array}{ll}
                                                                                                 1,\quad &\textrm{if $\sigma'\cong \sigma^\vee$;}\\
                                                                                                  0,\quad &\textrm{otherwise.}
                                                                                               \end{array}
                                                                                               \right.
\]
\end{thm}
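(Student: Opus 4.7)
The strategy is to reduce the theorem to the already-stated fact (from \cite{mi}) that $\Theta_1(\sigma)$ admits $\sigma^\vee$ as its unique irreducible quotient.

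The key step is to establish the adjunction
$$\Hom_{G\times G}(\CS,\, \sigma \widehat\otimes \sigma') \;\cong\; \Hom_{G_2}(\Theta_1(\sigma),\, \sigma').$$
Concretely, a $G_1\times G_2$-equivariant map $T:\CS\to \sigma\widehat\otimes \sigma'$ gives rise to a pairing $\tilde T:\CS\widehat\otimes \sigma^\vee\to \sigma'$ by contracting the $\sigma$-factor of the target against $\sigma^\vee$. The $G_1$-equivariance of $T$ (with $G_1$ acting on the $\sigma$-factor) translates into the diagonal $G_1$-invariance of $\tilde T$, while the $G_2$-equivariance of $T$ becomes $G_2$-equivariance of $\tilde T$ with respect to the actions on $\CS$ and on $\sigma'$. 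The universal property of the (Hausdorff) $G_1$-coinvariants then identifies the two Hom spaces. In the non-archimedean case this is essentially tautological; in the archimedean case one has to invoke the standard formalism of completed projective tensor products for nuclear Fr\'echet, in particular Casselman-Wallach, representations.

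Granting this adjunction, the conclusion follows at once. Any nonzero morphism $\Theta_1(\sigma)\to \sigma'$ is surjective, since $\sigma'$ is irreducible, and therefore exhibits $\sigma'$ as an irreducible quotient of $\Theta_1(\sigma)$; by the uniqueness statement from \cite{mi} this forces $\sigma'\cong \sigma^\vee$, so the Hom space vanishes when $\sigma'\not\cong \sigma^\vee$. When $\sigma'\cong \sigma^\vee$, the same uniqueness (in the strong form, which holds because $\Theta_1(\sigma)$ has finite length by Proposition \ref{ftfl}) means that $\Theta_1(\sigma)$ possesses a unique maximal proper subrepresentation; consequently every surjection onto $\sigma^\vee$ has the same kernel, and the Hom space is one-dimensional.

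The main technical obstacle is the archimedean version of the adjunction: one must check that the correspondence $T\leftrightarrow \tilde T$ is a bijection at the level of continuous morphisms and that $\tilde T$ descends continuously to the maximal Hausdorff $G_1$-quotient. These points rely on the nuclearity of Casselman-Wallach representations and Grothendieck's theory of completed tensor products, but are by now routine in this context.
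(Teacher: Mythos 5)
Your proof is essentially the paper's own (implicit) argument: the paper presents Theorem \ref{howe} as an equivalent formulation of the unique-irreducible-quotient statement it quotes from \cite{mi}, and the adjunction $\Hom_{G\times G}(\CS,\sigma\widehat\otimes\sigma')\cong\Hom_{G_2}(\Theta_1(\sigma),\sigma')$ together with Schur's lemma is precisely how that equivalence is made explicit. One small correction: the strong form of uniqueness (a unique maximal proper closed subrepresentation) does not follow from finite length plus uniqueness of the irreducible quotient up to isomorphism (e.g.\ $\sigma^\vee\oplus\sigma^\vee$ would be a counterexample); it is, however, exactly what the cited result asserts, so your argument goes through as written.
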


For applications to representation theory and automorhic forms, it is desirable to know whether or not the full theta lift itself is irreducible. This is known affirmatively for supercuspidal representations in the non-archimedean case, in a general setting of dual pair correspondences (see \cite{Ku}).  However, not much is known beyond the supercuspidal case.

 Write $\mathcal S^\circ$ for the space of Schwartz or  Bruhat-Schwartz functions on $G$ when $\RF$ is respectively archimedean or non-archimedean.  By extension by zero, we view it
as  a subrepresentation of $\CS$.
The following is the key result of this note.
\begin{thm}\label{pole1}
The following assertions are equivalent.
\begin{itemize}
\item[(a).] The Godment-Jacquet L-function  $\oL(s,\sigma)$ has no pole at $s=1/2$.
\item[(b).]  $\Hom_{G_1}(\CS/\CS^\circ, \sigma)= 0$.
\item[(c).]  $\Hom_{G_2}(\CS/\CS^\circ, \sigma^\vee)= 0$.
\item[(d).]  $\Hom_{G\times G}(\CS/\CS^\circ, \sigma\widehat \otimes \sigma^\vee)= 0$.
\end{itemize}
If one of the above  conditions is satisfied, then both $\Theta_1(\sigma)$ and $\Theta_2(\sigma^\vee)$ are irreducible.
\end{thm}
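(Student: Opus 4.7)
The plan is to prove (a) $\Leftrightarrow$ (d) $\Leftrightarrow$ (b), deduce (c) $\Leftrightarrow$ (d) by a symmetric argument, and then derive the irreducibility of $\Theta_1(\sigma)$ from (b) (and of $\Theta_2(\sigma^\vee)$ from (c), symmetrically).

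For (a) $\Leftrightarrow$ (d) I would invoke the Godement-Jacquet zeta integral $Z(s, \phi, f) = \int_G \phi(g) f(g) |\det g|_\RF^s \, dg$, where $f$ ranges over matrix coefficients of $\sigma$. The $|\det|_\RF^{dn/2}$-twist in \eqref{actgg} is arranged so that, at any $s$ where $Z(s, \cdot, \cdot)$ is regular, it defines an element of $\Hom_{G \times G}(\CS, \sigma \widehat\otimes \sigma^\vee)$, and the pole at $s = 1/2$ corresponds exactly to the pole of $\oL(s, \sigma)$. By Theorem \ref{howe} this Hom space is one-dimensional. If $\oL(s, \sigma)$ is regular at $s = 1/2$, then evaluating $Z(s,\cdot,\cdot)$ at $s=1/2$ yields the unique $G\times G$-invariant form on $\CS$; its restriction to $\CS^\circ$ is the absolutely convergent matrix-coefficient integral, manifestly nonzero for appropriate $\phi, f$, so (d) holds. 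Conversely, if $\oL(s, \sigma)$ has a pole at $s = 1/2$, then $Z(s, \phi, f)$ is entire in $s$ whenever $\phi \in \CS^\circ$ (since such $\phi$ vanish to infinite order along the singular locus $\oM_n(\RD)\setminus G$), so $\mathrm{Res}_{s=1/2} Z(s, \cdot, \cdot)$ is a nonzero element of $\Hom_{G\times G}(\CS, \sigma \widehat\otimes \sigma^\vee)$ vanishing identically on $\CS^\circ$, producing a nonzero element of $\Hom_{G\times G}(\CS/\CS^\circ, \sigma \widehat\otimes \sigma^\vee)$.

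For (b) $\Leftrightarrow$ (d) the main tool is the tensor-hom adjunction
\[
\Hom_{G\times G}(V, \sigma \widehat\otimes \sigma^\vee) \cong \Hom_{G_2}(\sigma, \Hom_{G_1}(V, \sigma))
\]
applied to $V = \CS/\CS^\circ$; the implication (b) $\Rightarrow$ (d) is immediate. For (d) $\Rightarrow$ (b), note that $\Hom_{G_1}(\CS, \sigma) \cong \Theta_1(\sigma)^\vee$ as $G_2$-representations (by definition of $\Theta_1$ and the duality between coinvariants and invariants). By Theorem \ref{howe} combined with another application of tensor-hom, $\Hom_{G_2}(\Theta_1(\sigma), \sigma') \cong \Hom_{G\times G}(\CS, \sigma \widehat\otimes \sigma') = \C$ iff $\sigma' \cong \sigma^\vee$; so $\Theta_1(\sigma)$ has $\sigma^\vee$ as its unique irreducible $G_2$-quotient, and dually $\sigma$ is the socle of $\Theta_1(\sigma)^\vee$. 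Since $\Theta_1(\sigma)^\vee$ is of finite length by Proposition \ref{ftfl}, this socle is essential; any nonzero $G_2$-subrepresentation contains $\sigma$. Hence if $\Hom_{G_1}(\CS/\CS^\circ, \sigma)$ — which embeds into $\Theta_1(\sigma)^\vee$ — is nonzero, it contains $\sigma$, so $\Hom_{G_2}(\sigma, \Hom_{G_1}(\CS/\CS^\circ, \sigma)) \ne 0$, contradicting (d). The equivalence (c) $\Leftrightarrow$ (d) follows by the same argument after swapping the roles of $G_1$ and $G_2$ and replacing $\sigma$ by $\sigma^\vee$.

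For the irreducibility conclusion, assume (b). The sequence $0 \to \CS^\circ \to \CS \to \CS/\CS^\circ \to 0$ combined with (b) yields, after applying $\Hom_{G_1}(-, \sigma)$, an injection $\Theta_1(\sigma)^\vee \hookrightarrow \Hom_{G_1}(\CS^\circ, \sigma)$ of $G_2$-representations. A Frobenius-reciprocity computation identifies $\Hom_{G_1}(\CS^\circ, \sigma) \cong \sigma$ as $G_2$-representations, with $v \in \sigma$ corresponding to the functional $\phi \mapsto \int_G \phi(g) \sigma(g) v |\det g|_\RF^{dn/2} \, dg$ — the twist in \eqref{actgg} is precisely what makes this assignment $G_1$-equivariant. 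Since $\Theta_1(\sigma)^\vee$ is nonzero (its contragredient $\Theta_1(\sigma)$ surjects onto $\sigma^\vee \ne 0$) and $\sigma$ is irreducible, the embedding must be an isomorphism, so $\Theta_1(\sigma) \cong \sigma^\vee$ is irreducible; the case of $\Theta_2(\sigma^\vee)$ under (c) is analogous. The main technical obstacle lies in the analytic content of the first step: one must verify uniformly, over archimedean and non-archimedean local fields, that $\mathrm{Res}_{s=1/2} Z(s, \phi, f)$ is genuinely nonzero as a bilinear form whenever $\oL(s, \sigma)$ has a pole there, which in the archimedean case demands careful control of the Schwartz topology.
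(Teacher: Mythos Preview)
Your strategy matches the paper's: both use the Godement--Jacquet zeta integral for (a) $\Leftrightarrow$ (d), finite length of the theta lift plus Howe duality for (b), (c) $\Leftrightarrow$ (d), and a Frobenius-reciprocity-type identification involving $\CS^\circ$ for the irreducibility. The nonvanishing you flag as a technical obstacle is handled in the paper by citing the Godement--Jacquet theorem that the normalized zeta integral $\oZ^\circ = \oZ/\oL(s,\sigma)$ is a generator of the one-dimensional space $\Hom_{G\times G}(\CS,\sigma\widehat\otimes\sigma^\vee)$ at every $s$; your residue is a nonzero scalar multiple of $\oZ^\circ|_{s=1/2}$, so no separate analytic estimate is needed.

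Two of your stated identifications are not literally correct and should be fixed. First, $\Hom_{G_1}(\CS,\sigma)$ is the full dual $\Theta_1(\sigma)^*$, not the contragredient $\Theta_1(\sigma)^\vee$. Second, your map $\sigma \to \Hom_{G_1}(\CS^\circ,\sigma)$, $v\mapsto(\phi\mapsto\int_G\phi(g)\sigma(g)v\,|\det g|_\RF^{dn/2}\,dg)$, is injective but not surjective: the target is $(\sigma^\vee)^*$, in which $\sigma$ sits as the smooth vectors. Your injection therefore reads $\Theta_1(\sigma)^* \hookrightarrow (\sigma^\vee)^*$, and one must pass to smooth vectors to conclude $\Theta_1(\sigma)^\vee \hookrightarrow \sigma$. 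This is salvageable, but the paper avoids the issue by working on the coinvariant side: it proves directly that $(\CS^\circ\widehat\otimes\sigma^\vee)_{G_1}\cong\sigma^\vee$ (its Lemma \ref{frob}, via Dixmier--Malliavin in the archimedean case) and that this surjects onto $\Theta_1(\sigma)$ once (b) holds, forcing $\Theta_1(\sigma)\cong\sigma^\vee$.
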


The following result will be proved in Section \ref{secf} by using the Fourier transform.

\begin{prpt}\label{theta12}
As representations of $G$, $\Theta_1(\sigma)$ and  $\Theta_2(\sigma)$ are isomorphic to each other.
\end{prpt}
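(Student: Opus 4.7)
The plan is to exhibit an explicit isomorphism $\Theta_1(\sigma)\to\Theta_2(\sigma)$ induced by a Fourier transform on $\oM_{n}(\RD)$. Fix a non-trivial additive character $\psi\colon\RF\to\C^\times$ and consider the non-degenerate $\RF$-bilinear pairing $B(x,y):=\tr_{\RD/\RF}(\tr(xy))$ on $\oM_{n}(\RD)$, where the inner $\tr$ is the matrix trace and the outer is the reduced trace of $\RD/\RF$. Let $\CF\colon\CS\to\CS$ denote the Fourier transform associated to $\psi$ and $B$; it is a topological linear automorphism.

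The crux is the intertwining identity
\[
   \CF\bigl((g,h).\phi\bigr)\;=\;(h,g).\CF(\phi),\qquad \phi\in\CS,\ g,h\in G,
\]
where on the right $(h,g)$ acts via \eqref{actgg}. Two ingredients suffice: the $\RF$-linear map $x\mapsto gxh^{-1}$ on $\oM_{n}(\RD)$ has Jacobian $\abs{\det(g^{-1}h)}_\RF^{-dn}$ (left multiplication by $g$ and right multiplication by $h^{-1}$ each scale the Haar measure by a factor of the form $\abs{\det(\cdot)}_\RF^{\pm dn}$, as one sees after passing to a splitting field of $\RD$), and the cyclicity of the reduced trace gives $B(gxh^{-1},y)=B(x,h^{-1}yg)$. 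The symmetric normalizing exponent $dn/2$ in \eqref{actgg} is precisely what is needed to balance the Jacobian and produce the clean swap $(g,h)\leftrightarrow(h,g)$ on the Fourier side.

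With the identity in hand, apply $\CF\widehat{\otimes}\,\mathrm{id}_{\sigma^\vee}$ to $\CS\widehat{\otimes}\,\sigma^\vee$. For $g\in G$, the diagonal $G_1$-action used in the definition of $\Theta_1(\sigma)$, namely $\phi\otimes v\mapsto((g,1).\phi)\otimes(g.v)$, is transported to $\phi\otimes v\mapsto((1,g).\CF(\phi))\otimes(g.v)$, which is precisely the diagonal $G_2$-action used in the definition of $\Theta_2(\sigma)$. Passage to (Hausdorff) coinvariants therefore yields a topological linear isomorphism $\Theta_1(\sigma)\to\Theta_2(\sigma)$. A parallel calculation shows that the residual $G_2$-action on $\Theta_1(\sigma)$, $\phi\otimes v\mapsto((1,g).\phi)\otimes v$, is carried to the residual $G_1$-action on $\Theta_2(\sigma)$, $\phi\otimes v\mapsto((g,1).\CF(\phi))\otimes v$; hence the isomorphism is $G$-equivariant under the standard identifications $G_1\cong G\cong G_2$. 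The argument poses no serious conceptual difficulty; the only mildly delicate point is the Jacobian computation on $\oM_{n}(\RD)$ in the general central simple algebra setting, required to confirm that the exponent $dn/2$ in \eqref{actgg} is the unique one that makes the Fourier transform execute the swap exactly.
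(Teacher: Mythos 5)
Your proposal is correct and is essentially the paper's own argument: the paper likewise defines the Fourier transform on $\CS$ via the reduced-trace pairing on $\oM_n(\RD)$ and a character $\psi$, establishes the swap identity $\CF((g,h).\phi)=(h,g).\CF(\phi)$, and deduces $\Theta_1(\sigma)\cong\Theta_2(\sigma)$ by passing to coinvariants. Your added Jacobian and equivariance verifications are exactly the routine checks the paper leaves implicit.
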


Theorem \ref{pole1} and Proposition \ref{theta12} have the following obvious consequence.
\begin{cort}\label{nopole}
Assume that  $\oL(s,\sigma)$ has no pole at $s=1/2$, or $\oL(s,\sigma^\vee)$ has no pole at $s=1/2$. Then as representations of $G$,  $\Theta_1(\sigma)\cong \sigma^\vee \cong \Theta_2(\sigma)$.
\end{cort}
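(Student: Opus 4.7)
The plan is to derive the corollary as a direct packaging of the two results immediately preceding it. The key input is Theorem \ref{pole1}, which upgrades the no-pole condition on $\oL(s,\sigma)$ at $s=1/2$ to irreducibility of $\Theta_1(\sigma)$ (and of $\Theta_2(\sigma^\vee)$). Combined with the unique-irreducible-quotient property of the full theta lift recalled before Theorem \ref{howe}---namely that $\sigma^\vee$ is always the unique irreducible quotient of both $\Theta_1(\sigma)$ and $\Theta_2(\sigma)$---irreducibility collapses the full theta lift onto $\sigma^\vee$. Proposition \ref{theta12} is then used to transfer the identification between $\Theta_1$ and $\Theta_2$.

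In the first case, where $\oL(s,\sigma)$ has no pole at $s=1/2$, Theorem \ref{pole1} gives that $\Theta_1(\sigma)$ is irreducible; since it surjects onto its unique irreducible quotient $\sigma^\vee$, the surjection must be an isomorphism, so $\Theta_1(\sigma)\cong \sigma^\vee$. Proposition \ref{theta12} then yields $\Theta_2(\sigma)\cong \Theta_1(\sigma)\cong \sigma^\vee$.

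In the second case, where $\oL(s,\sigma^\vee)$ has no pole at $s=1/2$, apply Theorem \ref{pole1} with $\sigma$ replaced by $\sigma^\vee$. This gives the irreducibility of $\Theta_2(\sigma^{\vee\vee})\cong \Theta_2(\sigma)$, where we use $\sigma^{\vee\vee}\cong\sigma$, valid in both the Casselman--Wallach and finite-length smooth settings described in the introduction. By the same unique-quotient argument, $\Theta_2(\sigma)\cong \sigma^\vee$, and Proposition \ref{theta12} gives $\Theta_1(\sigma)\cong \sigma^\vee$ as well.

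The only care required is bookkeeping: making sure to apply Theorem \ref{pole1} to the representation whose irreducibility one actually wants on the $\Theta_1$ or $\Theta_2$ side, and invoking $\sigma^{\vee\vee}\cong\sigma$ at the right place. There is no substantive obstacle, so I do not expect any real difficulty; the corollary is genuinely immediate from the two results it cites, which is consistent with the authors flagging it as an \textit{obvious consequence}.
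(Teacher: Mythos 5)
Your proposal is correct and matches the paper's intended argument: the paper declares the corollary an obvious consequence of Theorem \ref{pole1} and Proposition \ref{theta12}, and your two-case bookkeeping (irreducibility from Theorem \ref{pole1}, collapse onto the unique irreducible quotient $\sigma^\vee$, transfer between $\Theta_1$ and $\Theta_2$ via Proposition \ref{theta12}, with $\sigma^{\vee\vee}\cong\sigma$ in the second case) is exactly the reasoning being taken for granted there.
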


\begin{example}
Assume that $\RF$ is non-archimedean and $G=\GL_2(\RF)$. If $\sigma$ is not the trivial representation, then $\oL(s,\sigma)$ has no pole at $s=1/2$, or $\oL(s,\sigma^\vee)$ has no pole at $s=1/2$. Thus by Corollary \ref{nopole}, $\Theta_1(\sigma)$ and $\Theta_2(\sigma)$ are irreducible. On the other hand, it is shown in \cite{Xue} that  $\Theta_1(\sigma)$ and $\Theta_2(\sigma)$ are reducible when $\sigma$ is the trivial representation of $\GL_2(\RF)$.

 \end{example}

We are particularly interested in generic representations of $\GL_n(\RF)$ since they appear as local components of cuspidal automorphic representations. The following proposition asserts that the assumption in Corollary \ref{nopole} does hold for  generic representations of $\GL_n(\RF)$.

\begin{prpt}\label{generic}
Assume that $\RD=\RF$ and $\sigma$ is generic. Then  $\oL(s,\sigma)$ has no pole at $s=1/2$, or $\oL(s,\sigma^\vee)$ has no pole at $s=1/2$.
\end{prpt}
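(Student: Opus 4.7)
The plan is to combine the Langlands--Zelevinsky classification of generic irreducible representations with the explicit form of the local Godement--Jacquet L-function of an essentially square-integrable representation. By this classification, $\sigma$ is fully parabolically induced from essentially square-integrable representations: $\sigma \cong \delta_1 \times \cdots \times \delta_r$, with each $\delta_i$ an irreducible essentially square-integrable representation of some $\GL_{n_i}(\RF)$. For $\GL_n$, irreducibility of this full induction is equivalent to pairwise irreducibility of every $\delta_i \times \delta_j$ (Bernstein--Zelevinsky in the non-archimedean case, Speh--Vogan in the archimedean case). By multiplicativity of Godement--Jacquet L-functions,
\[
\oL(s, \sigma) = \prod_{i=1}^r \oL(s, \delta_i), \qquad \oL(s, \sigma^\vee) = \prod_{i=1}^r \oL(s, \delta_i^\vee).
\]

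Argue by contradiction: if both L-functions have a pole at $s = 1/2$, then some factor $\delta_{i_0}$ contributes a pole to $\oL(s, \sigma)$ and some factor $\delta_{j_0}$ contributes a pole to $\oL(s, \sigma^\vee)$ at $s = 1/2$. Writing $\delta_i = \delta_i^u \otimes |\det|^{t_i}$ with $\delta_i^u$ unitary, the first forces $t_{i_0} \leq -1/2$ and the second forces $t_{j_0} \geq 1/2$, so $i_0 \neq j_0$. The explicit formula for $\oL(s, \delta)$ then narrows the form of the two factors sharply. In the non-archimedean case, a direct computation with $\oL(s, \mathrm{St}_m(\rho)\otimes|\det|^\tau) = \oL(s+\tau+(m-1)/2, \rho)$ and the unitarity of $\rho$ forces $\delta_{i_0} \cong \mathrm{St}_m \otimes |\det|^{-m/2}$ and $\delta_{j_0} \cong \mathrm{St}_{m'} \otimes |\det|^{m'/2}$ for some $m, m' \geq 1$, both based on the trivial character of $\RF^\times$; their Zelevinsky segments $[-(2m-1)/2,\,-1/2]$ and $[1/2,\,(2m'-1)/2]$ over the trivial supercuspidal are consecutive, hence linked, so $\delta_{i_0} \times \delta_{j_0}$ is reducible by Zelevinsky's theorem, contradicting the pairwise irreducibility of the Langlands data.

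In the archimedean case, a parallel analysis of the positions of the poles of the $\Gamma_\R$- or $\Gamma_\C$-factors pins down $\delta_{i_0}, \delta_{j_0}$ as specific twisted characters of $\GL_1(\RF)$ or, when $\RF = \R$, twisted discrete series of $\GL_2(\R)$, with their exponents forced into arithmetic progressions $t_{i_0} + (\text{weight shift}) \in -\tfrac{1}{2} - \Z_{\geq 0}$ and $t_{j_0} - (\text{weight shift}) \in \tfrac{1}{2} + \Z_{\geq 0}$ dictated by the Gamma-factor pole locations. Reducibility of $\delta_{i_0} \times \delta_{j_0}$ in each case is read off from the Langlands linkage criterion: for $\RF = \C$, from the condition $\chi_{i_0}/\chi_{j_0} = z^a \bar z^b$ with $a, b \in \Z_{\leq -1}$, which follows from a triangle-inequality bound on $(t_{i_0} - t_{j_0}) \pm (k_{i_0} - k_{j_0})/2$; for $\RF = \R$ with both factors characters, from a parity computation placing $\chi_{i_0}/\chi_{j_0}$ into the reducibility locus of $\GL_2(\R)$ principal series. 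The main obstacle is the remaining $\R$-subcases in which one or both of $\delta_{i_0}, \delta_{j_0}$ is a $2$-dimensional discrete series of $\GL_2(\R)$; reducibility of the resulting induction on $\GL_3(\R)$ or $\GL_4(\R)$ is verified via Jacquet-module analysis, reducing each such subcase to the analogous parity-and-integrality bookkeeping on the characters appearing in the standard principal-series module of the discrete-series factor, where the arithmetic-progression constraints from the $\Gamma_\C$-factor pole conditions always land the resulting pair inside the reducibility locus.
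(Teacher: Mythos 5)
Your strategy is sound and its mechanism is genuinely different from the paper's. Both arguments start identically: decompose $\sigma$ as a full induced product of essentially square-integrable representations, use multiplicativity of the Godement--Jacquet L-function, and locate two \emph{distinct} factors $\delta_{i_0},\delta_{j_0}$ with $\oL(s,\delta_{i_0})$ and $\oL(s,\delta_{j_0}^\vee)$ having poles at $s=\tfrac12$. From there the paper converts the two poles into a pole at $s=1$ of the Rankin--Selberg L-function $\oL(s,\delta_{j_0}^\vee\times\delta_{i_0})$ (Proposition \ref{pat1}, proved by exactly the explicit JPSS and $\Gamma$-factor computations you sketch) and then invokes Casselman--Shahidi \cite{CS} via Proposition \ref{shahidi} to turn that pole into reducibility of $\delta_{j_0}\dot\times\delta_{i_0}$, uniformly in all cases. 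You instead pin down $\delta_{i_0},\delta_{j_0}$ explicitly and check reducibility of $\delta_{i_0}\dot\times\delta_{j_0}$ directly. In the non-archimedean case your argument is complete and arguably cleaner than the paper's: the pole conditions force the segments $[-(2m-1)/2,\,-1/2]$ and $[1/2,\,(2m'-1)/2]$ over the trivial character, which are linked, so Zelevinsky gives reducibility with no Rankin--Selberg input at all. Your $\GL_1(\C)$ and $\GL_1(\R)$-character computations are also correct: the integrality, sign and parity constraints coming from the $\Gamma$-factor poles do place $\chi_{i_0}\chi_{j_0}^{-1}$ in the reducibility locus of the relevant principal series.

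The gap is in the remaining real subcases, where one or both of $\delta_{i_0},\delta_{j_0}$ is a discrete series of $\GL_2(\R)$ (the analogues of Lemmas \ref{l53} and \ref{l54} in the paper). There you only assert that reducibility of the induced representation on $\GL_3(\R)$ or $\GL_4(\R)$ is ``verified via Jacquet-module analysis, reducing each such subcase to \dots bookkeeping on the characters appearing in the standard principal-series module of the discrete-series factor.'' As stated this is not a valid reduction: replacing the discrete-series factor by the principal series containing it and observing that the resulting $\GL_3(\R)$ or $\GL_4(\R)$ principal series is reducible says nothing about reducibility of $\delta_{i_0}\dot\times\delta_{j_0}$ itself, since an irreducible representation can perfectly well be a constituent of a reducible larger induced module. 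To close this you must either genuinely carry out the archimedean Jacquet-module or intertwining-operator computation for these standard modules (in effect Speh's results for $\GL_n(\R)$), or cite a general irreducibility criterion for products of essentially discrete series (Speh--Vogan), or--as the paper does--route through the pole of the Rankin--Selberg L-function at $s=1$ and apply \cite{CS}; that detour is precisely what allows the paper to avoid any direct reducibility analysis in exactly the cases your sketch leaves open.
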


By Corollary \ref{nopole} and Proposition \ref{generic},  we get the following result.
\begin{thm}
Assume that $\RD=\RF$ and $\sigma$ is generic. Then as representations of $G$,  $\Theta_1(\sigma)\cong \sigma^\vee \cong \Theta_2(\sigma)$.
\end{thm}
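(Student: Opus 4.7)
The proof is essentially immediate from results already stated in the excerpt. First I would apply Proposition \ref{generic} to the given generic representation $\sigma$ of $\GL_n(\RF)$: this gives one of the two non-pole conditions, namely that either $\oL(s,\sigma)$ or $\oL(s,\sigma^\vee)$ is regular at $s=1/2$. That is precisely the hypothesis of Corollary \ref{nopole}, so Corollary \ref{nopole} directly yields $\Theta_1(\sigma)\cong \sigma^\vee\cong \Theta_2(\sigma)$ as representations of $G=\GL_n(\RF)$, which is exactly the claim.

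There is therefore no real obstacle in proving the theorem itself; the substantive content sits entirely in the two inputs. The hard part in Proposition \ref{generic} will be a careful analysis of the Godement-Jacquet L-function of a generic representation of $\GL_n(\RF)$ in terms of its Langlands data, showing that simultaneous poles for $\sigma$ and $\sigma^\vee$ at the symmetric point $s=1/2$ would force incompatible real-part constraints on the corresponding parameters. The hard part in Corollary \ref{nopole} is the equivalence in Theorem \ref{pole1}, whose proof presumably uses distributional analysis on $\oM_n(\RD)$ to identify poles of $\oL(s,\sigma)$ with nonzero $G\times G$-equivariant maps out of $\CS/\CS^\circ$ (i.e.\ distributions supported on the singular locus $\oM_n(\RD)\setminus G$), together with Proposition \ref{theta12} identifying $\Theta_1(\sigma)$ and $\Theta_2(\sigma)$ via the Fourier transform. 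Once those tools are in place, the final theorem requires no further input beyond the two-line combination above.
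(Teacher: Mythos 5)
Your proof is correct and coincides with the paper's: the theorem is obtained exactly by feeding Proposition \ref{generic} into Corollary \ref{nopole} (which itself is the combination of Theorem \ref{pole1} and Proposition \ref{theta12}), with all substantive work residing in those inputs. No gap.
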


As one step towards the proof of Proposition \ref{generic}, in Section \ref{secl}  we will prove the following result which is interesting in itself.

\begin{prpt}\label{pat1}
Let $\sigma_1, \sigma_2$ be irreducible admissible smooth representations of $\GL_{n_1}(\RF)$ and $\GL_{n_2}(\RF)$ ($n_1,n_2\geq 1$), respectively. Assume that  both $\oL(s,\sigma_1)$ and  $\oL(s,\sigma_2)$ have a pole at $s=1/2$. Then the Rankin-Selberg L-function $\oL(s,\sigma_1\times \sigma_2)$ has a pole at $s=1$.
\end{prpt}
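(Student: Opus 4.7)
The plan is to reduce to the essentially square-integrable case via the Langlands classification, and then verify the pole at $s = 1$ by a direct analysis of the resulting L-factors.

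By the Langlands classification, each $\sigma_i$ is the unique irreducible quotient of a parabolic induction from essentially discrete series $\delta_{i,1}, \ldots, \delta_{i,k_i}$ of smaller general linear groups. Multiplicativity of the local L-factor under parabolic induction, for both Godement-Jacquet and Rankin-Selberg L-functions of $\GL_n$, gives
\[
\oL(s, \sigma_i) = \prod_{j} \oL(s, \delta_{i,j}), \qquad \oL(s, \sigma_1 \times \sigma_2) = \prod_{j, k} \oL(s, \delta_{1,j} \times \delta_{2,k}).
\]
Each of these factors is a nowhere-vanishing meromorphic function, so the hypothesis that $\oL(s, \sigma_i)$ has a pole at $s = 1/2$ forces some $\oL(s, \delta_{i, j_i})$ to have a pole there. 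It therefore suffices to prove the proposition under the additional assumption that $\sigma_1$ and $\sigma_2$ are themselves essentially square-integrable.

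For $\RF$ non-archimedean, the Bernstein-Zelevinsky classification writes $\sigma_i = \operatorname{St}(\rho_i, r_i)$ as a generalized Steinberg representation built from a supercuspidal $\rho_i$ of some $\GL_{a_i}(\RF)$ with $a_i r_i = n_i$. The identity $\oL(s, \operatorname{St}(\rho, r)) = \oL(s + (r-1)/2, \rho)$, combined with the fact that the supercuspidal L-factor is trivial unless $\rho$ is a $1$-dimensional unramified character, pins down $\sigma_i = \operatorname{St}(|\cdot|_\RF^{-r_i/2}, r_i)$. Then the classical Jacquet-Piatetski-Shapiro-Shalika formula
\[
\oL(s, \operatorname{St}(\chi_1, r_1) \times \operatorname{St}(\chi_2, r_2)) = \prod_{k=1}^{\min(r_1, r_2)} \oL\bigl(s + \tfrac{r_1+r_2}{2} - k,\ \chi_1 \chi_2\bigr),
\]
evaluated at $\chi_i = |\cdot|_\RF^{-r_i/2}$, yields $\prod_{k=1}^{\min(r_1, r_2)} (1 - q^{k-s})^{-1}$, which has a pole at $s = 1$ (coming from $k = 1$).

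For $\RF$ archimedean, essentially square-integrable representations of $\GL_n(\RF)$ are characters of $\RF^\times$ (when $n = 1$) and, for $\RF = \R$ only, the $2$-dimensional essentially discrete series of $\GL_2(\R)$ (parameterized by characters of $W_\C$ via induction). In each case the Godement-Jacquet L-factor is a single Gamma factor $\Gamma_\R$ or $\Gamma_\C$, and the pole at $s = 1/2$ is equivalent to the exponent inside the Gamma factor being a non-positive integer of the appropriate parity. Decomposing the tensor product of Langlands parameters via the projection formula (when one factor is $1$-dimensional over $W_\R$) or Mackey induction (when both are $2$-dimensional over $W_\R$), and substituting the constraints coming from the two $s = 1/2$ poles, one checks that the corresponding Gamma factors in $\oL(s, \sigma_1 \times \sigma_2)$ acquire a pole at $s = 1$. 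The main technical burden is this archimedean case analysis: one must enumerate the sub-cases (two characters over $\R$, two characters over $\C$, and the two variants involving a $2$-dimensional factor for $\RF = \R$) and verify the appropriate shift-of-exponent identity in each; the key algebraic fact is that the relevant exponent shifts produced by the two $s = 1/2$ hypotheses always sum to a non-positive value with the right parity for a $\Gamma_\R$- or $\Gamma_\C$-pole at $s = 1$.
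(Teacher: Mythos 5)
Your proposal takes essentially the same route as the paper: reduce to essentially square-integrable $\sigma_1,\sigma_2$ via the Langlands classification and multiplicativity (and non-vanishing) of local L-factors, handle the non-archimedean case by identifying each $\sigma_i$ as an unramified-twisted Steinberg and applying the JPSS segment formula (your product $\prod_{k=1}^{\min(r_1,r_2)}(1-q^{k-s})^{-1}$ agrees with the paper's Lemma \ref{l51}), and handle the archimedean case by an exponent-and-parity analysis of Gamma factors after decomposing the tensor product of Weil-group parameters. The archimedean verification you leave as ``one checks'' is precisely the case-by-case computation carried out in the paper's Lemmas \ref{l52}, \ref{l53} and \ref{l54}, and it goes through exactly as you describe, so the argument is correct.
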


\begin{rremark}
By using local Langlands correspondence for both $\GL_n(\RF)$ and $\GL_n(\RD)$, Proposition \ref{pat1} implies the similar result with $\RF$  replaced by $\RD$ (The Rankin-Selberg L-function for $\GL_{n_1}(\RD)\times \GL_{n_2}(\RD)$ is  defined via the Jacquet Langlands correspondence). 
 \end{rremark}

\section{A proof of Theorem \ref{pole1}}

We continue with the notation of the Introduction. The local Godement-Jacquet zeta integral attached to $\sigma$ is defined by
\[
  \oZ(\phi, \lambda, v;s):=\int_G \phi(g) \la g.v, \lambda\ra \abs{\det(g)}_\RF^{s+\frac{dn-1}{2}}\,\od\! g, \quad \phi\in \CS,\, \lambda\in \sigma^\vee,\, v\in \sigma,\, s\in \C,
\]
where $\od\!g$ is a fixed Haar measure on $G$.
It is clear that if $\phi\in \CS^\circ$, then the integral is  absolutely convergent and is holomorphic in the variable $s\in \C$.

We summarize the basic results  of  local Godement-Jacquet zeta integrals as in the following theorem (\cf \cite[Theorems 3.3 and 8.7]{GJ}).
\begin{thm}\label{gjzeta}
When the real part of $s$ is sufficiently large, the integral $\oZ(\phi, \lambda,v;s)$ is absolutely convergent for all $\phi$, $\lambda$ and $v$. Moreover,
there exists a (continuous in the archimedean case) map
\[
  \oZ^\circ: \CS\times \sigma^\vee\times \sigma\times \C\rightarrow \C
\]
which is linear on the first three variables and holomorpic on the last variable   such that
\begin{itemize}
  \item
  when the real part of $s$ is sufficiently large,
    \[
    \oZ^\circ(\phi, \lambda,v ;s)=\frac{\oZ(\phi, \lambda,v;s)}{\oL(s,\sigma)},\quad \textrm{for all }\phi, v,\lambda;  \quad\textrm{and}
    \]
    \item
    for each $s\in \C$, the trilinear form $\oZ^\circ(\cdot, \cdot, \cdot; s)$ yields a generator of the one dimensional vector space
    \[
      \Hom_{G\times G}(\CS\widehat \otimes \sigma^\vee\widehat \otimes \sigma, \abs{\det}^{s-\frac{1}{2}}_\RF\otimes \abs{\det}^{\frac{1}{2}-s}_\RF ).
    \]
\end{itemize}
\end{thm}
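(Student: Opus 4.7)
The plan is to follow the Godement-Jacquet machinery of local zeta integrals, with the uniqueness-of-functional statement reduced to the Howe duality of Theorem \ref{howe}. First, I would establish absolute convergence of $\oZ(\phi,\lambda,v;s)$ when $\Re(s)$ is sufficiently large. The matrix coefficient $g\mapsto\langle g.v,\lambda\rangle$ of an admissible representation has at most polynomial growth in a matrix norm on $\oM_n(\RD)$, so combined with the Schwartz decay of $\phi$ the integral converges once $\Re(s)$ exceeds a threshold depending on $\sigma$. The $G\times G$-equivariance of the resulting trilinear form -- that $\oZ(\cdot,\cdot,\cdot;s)$ lies in $\Hom_{G\times G}(\CS\widehat\otimes\sigma^\vee\widehat\otimes\sigma,\ |\det|_\RF^{s-1/2}\otimes|\det|_\RF^{1/2-s})$ -- is then a direct change-of-variable check from (\ref{actgg}) using the unimodularity of $G$.

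Next, I would prove that this Hom space is one-dimensional for every $s\in\C$. Setting $\sigma_s := \sigma\otimes|\det|_\RF^{s-1/2}$, tensor-hom adjunction absorbs the target characters into the two copies of $\sigma$ and produces a natural isomorphism
\[
  \Hom_{G\times G}\bigl(\CS\widehat\otimes\sigma^\vee\widehat\otimes\sigma,\ |\det|_\RF^{s-1/2}\otimes|\det|_\RF^{1/2-s}\bigr) \cong \Hom_{G\times G}\bigl(\CS,\ \sigma_s\widehat\otimes\sigma_s^\vee\bigr),
\]
and Theorem \ref{howe} applied to the irreducible admissible representation $\sigma_s$ yields the required one-dimensionality (regardless of $s$).

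For the third step, I would meromorphically continue $\oZ(\phi,\lambda,v;s)$ in $s$ and set $\oZ^\circ(\phi,\lambda,v;s) := \oZ(\phi,\lambda,v;s)/\oL(s,\sigma)$, where $\oL(s,\sigma)$ is defined following Godement-Jacquet as the minimal normalizer making every such ratio holomorphic. In the non-archimedean case, $\oZ(\phi,\lambda,v;s)$ lies in $\C(q^{-s})$ with a uniform denominator and $\oL(s,\sigma)$ is extracted as the $\gcd$ of the resulting fractional ideal; meromorphic continuation can alternatively be obtained from the local functional equation coming from Fourier duality on $\oM_n(\RD)$. In the archimedean case, I would invoke Bernstein's theorem on meromorphic continuation of parameter-dependent integrals and identify the pole structure in order to write $\oL(s,\sigma)$ as an explicit product of Gamma factors. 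Minimality of $\oL$ ensures that for each $s_0$ some triple achieves $\oZ^\circ(\phi,\lambda,v;s_0)\ne 0$, so $\oZ^\circ(\cdot,\cdot,\cdot;s_0)$ is a nonzero element, hence a generator, of the one-dimensional Hom space from the second step.

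The main obstacle is the archimedean case: establishing \emph{continuity} of $\oZ^\circ$ in $\phi\in\CS$, jointly with holomorphy in $s$, requires uniform bounds on the meromorphic continuation as $\phi$ varies in a bounded set of Schwartz seminorms. I would handle this via the Casselman-Wallach framework of smooth moderate-growth representations together with the technical estimates in \cite[\S\S 3, 8]{GJ}.
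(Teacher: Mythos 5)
The paper gives no proof of Theorem \ref{gjzeta}: it is quoted as a summary of \cite[Theorems 3.3 and 8.7]{GJ}. Your reconstruction therefore follows essentially the same route as the source the paper relies on (convergence for $\Re(s)$ large, equivariance by a change of variables, meromorphic continuation and division by the normalizing factor defined as a gcd in the non-archimedean case and by explicit Gamma factors in the archimedean case). The one genuine difference is in the second bullet: you obtain the one-dimensionality of $\Hom_{G\times G}(\CS\widehat\otimes\sigma^\vee\widehat\otimes\sigma,\ \abs{\det}^{s-\frac{1}{2}}_\RF\otimes\abs{\det}^{\frac{1}{2}-s}_\RF)$ by twisting to $\sigma_s=\sigma\otimes\abs{\det}^{s-\frac{1}{2}}_\RF$ and invoking Theorem \ref{howe}, rather than extracting uniqueness from the Godement--Jacquet functional-equation formalism; this is a legitimate and arguably cleaner route, since Theorem \ref{howe} is already part of the paper's toolkit and your twisting argument is compatible with the equivariance computation you describe for $\oZ(\cdot,\cdot,\cdot;s)$.

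Two points need repair or more care. First, your convergence estimate is misstated: a matrix coefficient $g\mapsto\la g.v,\lambda\ra$ is in general \emph{not} polynomially bounded in a matrix norm on $\oM_n(\RD)$ --- it can blow up as $\det g\to 0$ (already for $\sigma=\abs{\det}_\RF^{-N}$). The correct bound is polynomial in $\norm{g}+\norm{g^{-1}}$, equivalently in $\norm{g}$ and $\abs{\det g}_\RF^{-1}$, and it is precisely the factor $\abs{\det g}_\RF^{s+\frac{dn-1}{2}}$ with $\Re(s)$ large that absorbs the blow-up near the singular locus; note that with your stated bound the threshold would not depend on $\sigma$, contradicting your own (correct) conclusion that it does. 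Second, in the archimedean case the ``tensor-hom adjunction'' is not formal: the continuous dual of $\sigma^\vee$ is strictly larger than $\sigma$, so to see that \emph{every} continuous $G\times G$-equivariant trilinear form arises from an element of $\Hom_{G\times G}(\CS,\sigma_s\widehat\otimes\sigma_s^\vee)$ you must argue that the associated continuous equivariant map $\CS\to(\sigma^\vee\widehat\otimes\sigma)'$ takes values in the smooth moderate-growth vectors, which recover $\sigma_s\widehat\otimes\sigma_s^\vee$ by admissibility and the Casselman--Wallach theory \cite{Ca,Wa2,BK}; this is the nontrivial direction of the one-dimensionality claim and should be made explicit. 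With these fixes, and with the archimedean continuity and nonvanishing of $\oZ^\circ$ taken from \cite{GJ} as you propose, your argument is complete and consistent with what the paper cites.
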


Let $\oZ^\circ$ be as in Theorem \ref{gjzeta}. Write $\oZ^{\frac{1}{2}}$ for the  generator of the one dimensional space
\[
      \Hom_{G\times G}(\CS, \sigma \widehat \otimes \sigma^\vee)
    \]
produced by the trilinear form $\oZ^\circ(\cdot, \cdot, \cdot; \frac{1}{2})$.

\begin{lemt}\label{gjzetap}
The Godement-Jacuqet L-function $\oL(s,\sigma)$ has a pole at $s=\frac{1}{2}$ if and only if
\[
  \oZ^\circ|_{\CS^\circ \times \sigma^\vee\times \sigma\times \{\frac{1}{2}\}}=0,\quad \textrm{or equivalently,}\quad \oZ^{\frac{1}{2}}|_{\CS^\circ}=0.
\]
\end{lemt}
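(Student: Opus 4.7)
The strategy is to use the factorization
\[
  \oZ(\phi,\lambda,v;s) \;=\; \oL(s,\sigma)\,\oZ^\circ(\phi,\lambda,v;s),
\]
which, by Theorem \ref{gjzeta}, is valid for $\Re(s)$ sufficiently large and extends to an identity of meromorphic functions of $s$ by analytic continuation. The core observation is that if $\phi \in \CS^\circ$, then $\oZ(\phi,\lambda,v;s)$ is entire in $s$: the integrand is supported in the open subset $G \subset \oM_n(\RD)$, on which $\abs{\det(g)}_\RF^{s+\frac{dn-1}{2}}$ is a smooth (respectively locally constant) function of $g$ for every $s$, and $\phi$ has compact support there.

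For the ``only if'' direction, I would suppose that $\oL(s,\sigma)$ has a pole of order $m\geq 1$ at $s=1/2$ and write $\oL(s,\sigma) = c(s)(s-1/2)^{-m}$ with $c$ holomorphic and non-vanishing at $1/2$. Since $\oZ^\circ$ is holomorphic in $s$ and the product $\oL(s,\sigma)\,\oZ^\circ(\phi,\lambda,v;s)$ must be holomorphic at $1/2$ whenever $\phi\in \CS^\circ$, the function $\oZ^\circ(\phi,\lambda,v;\cdot)$ has a zero of order $\geq m$ at $1/2$; in particular, $\oZ^\circ(\phi,\lambda,v;1/2)=0$.

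For the ``if'' direction, I assume that $\oL(s,\sigma)$ is holomorphic at $s=1/2$. The Godement--Jacquet local $L$-factor has no zeros (it is the reciprocal of a polynomial in $q^{-s}$ in the non-archimedean case, a product of Gamma factors in the archimedean case), so $\oL(1/2,\sigma)\in\C^\times$. It therefore suffices to exhibit $\phi\in \CS^\circ$, $\lambda\in\sigma^\vee$, $v\in\sigma$ with $\oZ(\phi,\lambda,v;1/2)\neq 0$. Pick $v\neq 0$ and $\lambda$ with $\la v,\lambda\ra\neq 0$. In the non-archimedean case, let $K\subset G$ be a compact open subgroup fixing $v$ and on which $\abs{\det}_\RF\equiv 1$, and take $\phi=\mathbf{1}_K$; the integral collapses to $\vol(K)\la v,\lambda\ra\neq 0$. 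In the archimedean case, take $\phi\in C_c^\infty(G)$ a non-negative bump function of integral one concentrated in a small neighbourhood of $1$; the continuity of $g\mapsto\la g.v,\lambda\ra$ on the Casselman--Wallach representation $\sigma$ forces $\oZ(\phi,\lambda,v;1/2)$ to approach $\la v,\lambda\ra\neq 0$ as the support shrinks.

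The equivalence of $\oZ^\circ|_{\CS^\circ\times\sigma^\vee\times\sigma\times\{1/2\}}=0$ with $\oZ^{1/2}|_{\CS^\circ}=0$ is built into the definition of $\oZ^{1/2}$ as the $G\times G$-equivariant map $\CS\to\sigma\widehat\otimes\sigma^\vee$ whose pairing with $(\lambda,v)\in\sigma^\vee\times\sigma$ recovers $\oZ^\circ(\cdot,\lambda,v;1/2)$; hence $\oZ^{1/2}(\phi)$ vanishes precisely when $\oZ^\circ(\phi,\lambda,v;1/2)$ vanishes for all $\lambda,v$. The only non-routine step is the archimedean construction of the test datum in the ``if'' direction; the rest is a matching-of-orders argument in the variable $s$.
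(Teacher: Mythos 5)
Your proposal is correct and follows essentially the same route as the paper: it rests on the factorization $\oZ=\oL\cdot\oZ^\circ$, the entirety of $\oZ(\phi,\lambda,v;\cdot)$ for $\phi\in\CS^\circ$, and the fact that local $L$-factors have no zeros, which is exactly the paper's Laurent-expansion argument. The only difference is that you make explicit the nonvanishing test datum ($\mathbf{1}_K$ in the non-archimedean case, a bump function near $1$ in the archimedean case) that the paper leaves implicit in its assertion that the limit is identically zero if and only if the pole order is positive.
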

\begin{proof}
Denote by $c_r(s-\frac{1}{2})^{-r}$  the leading term of the Laurent expansion of  $\oL(s,\sigma)$ around  $s=\frac{1}{2}$. Then $r\geq 0$ as all  local L-functions  have no zero.
Now we have that
 \[
    \oZ^\circ(\phi, v, \lambda;\frac{1}{2})=\lim_{s\rightarrow \frac{1}{2}} \left(s-\frac{1}{2}\right)^{r}  \, \cdot \, c_r^{-1}\cdot  \int_G \phi(g) \la g.v, \lambda\ra \abs{\det(g)}_\RF^{\frac{dn}{2}}\,\od\! g
    \]
 for all $\phi\in \CS^\circ,\, \lambda\in \sigma^\vee,\, v\in \sigma$. This is identically zero if and only if $r>0$. Thus the lemma follows.
\end{proof}

\begin{lemt}\label{l1}
If
\be\label{vant}
\Hom_{G_1}(\CS/\CS^\circ, \sigma)\neq  0 \quad\textrm{or }\quad
  \Hom_{G_2}(\CS/\CS^\circ, \sigma^\vee)\neq 0,
  \ee
  then
  $\oZ^{\frac{1}{2}}|_{\CS^\circ}= 0$
\end{lemt}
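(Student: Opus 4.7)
The plan is to reinterpret $\oZ^{1/2}$ as a $G_2$-equivariant surjection out of the full theta lift $\Theta_1(\sigma)$, and to translate the vanishing of $\oZ^{1/2}|_{\CS^\circ}$ into a factorization statement forced by the uniqueness of the irreducible quotient of $\Theta_1(\sigma)$. At $s = \tfrac{1}{2}$ the target character $\abs{\det}_\RF^{s-1/2} \otimes \abs{\det}_\RF^{1/2-s}$ is trivial, so by Theorem \ref{gjzeta} the trilinear form $\oZ^{1/2}$ generates $\Hom_{G\times G}(\CS\widehat\otimes \sigma^\vee \widehat\otimes \sigma, \C)$. Taking $G_1$-coinvariants on the first two factors and using duality gives, equivalently, a nonzero $G_2$-equivariant map $p_1 \colon \Theta_1(\sigma) \to \sigma^\vee$; by the uniqueness of the irreducible quotient of $\Theta_1(\sigma)$ recorded after Proposition \ref{ftfl}, $p_1$ is a nonzero scalar multiple of the canonical projection. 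Symmetrically, $\oZ^{1/2}$ yields a canonical $G_1$-equivariant surjection $p_2 \colon \Theta_2(\sigma^\vee) \to \sigma$.

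Set $\Theta_1^\circ(\sigma) := (\CS^\circ \widehat\otimes \sigma^\vee)_{G_1}$ and $\bar\Theta_1(\sigma) := ((\CS/\CS^\circ) \widehat\otimes \sigma^\vee)_{G_1}$, and define $\Theta_2^\circ(\sigma^\vee)$ and $\bar\Theta_2(\sigma^\vee)$ analogously. Then $\oZ^{1/2}|_{\CS^\circ} = 0$ is equivalent to $p_1$ vanishing on the image of $\Theta_1^\circ(\sigma)$ in $\Theta_1(\sigma)$, i.e.\ to $p_1$ factoring through the quotient $\bar\Theta_1(\sigma)$; the analogous statement holds for $p_2$ and $\bar\Theta_2(\sigma^\vee)$.

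It then suffices to show that, under either hypothesis, the corresponding quotient is nonzero. Suppose $\Hom_{G_1}(\CS/\CS^\circ, \sigma) \neq 0$ and pick a nonzero $T$ in it. The bilinear form $(x, \lambda) \mapsto \langle T(x), \lambda\rangle$ on $(\CS/\CS^\circ) \times \sigma^\vee$ is $G_1$-invariant and nonzero, hence descends to a nonzero functional on $\bar\Theta_1(\sigma)$, so $\bar\Theta_1(\sigma) \neq 0$. As a quotient of the admissible smooth representation $\Theta_1(\sigma)$ (Proposition \ref{ftfl}), $\bar\Theta_1(\sigma)$ is itself admissible smooth of finite length and therefore has a nonzero irreducible quotient; any such quotient is also an irreducible quotient of $\Theta_1(\sigma)$, so by uniqueness it must be $\sigma^\vee$. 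The resulting composition $\Theta_1(\sigma) \twoheadrightarrow \bar\Theta_1(\sigma) \twoheadrightarrow \sigma^\vee$ is thus a nonzero scalar multiple of $p_1$, which forces $p_1$ to factor through $\bar\Theta_1(\sigma)$ and therefore $\oZ^{1/2}|_{\CS^\circ} = 0$. The second case is entirely parallel, with $\Theta_1$ and $\sigma$ replaced by $\Theta_2$ and $\sigma^\vee$. The principal technical point to verify is the right-exactness of $G_1$-coinvariants combined with the completed projective tensor product in the archimedean case, ensuring that $\bar\Theta_1(\sigma)$ really is the desired quotient and is admissible smooth; this is standard for Casselman--Wallach representations paired with Schwartz space.
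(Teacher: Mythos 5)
Your argument is correct and is essentially the paper's own proof: both deduce from the hypothesis, via the admissibility of the theta lift (Proposition \ref{ftfl}) and the uniqueness statement of Theorem \ref{howe}, that there is a nonzero element of $\Hom_{G\times G}(\CS,\sigma\widehat\otimes\sigma^\vee)$ vanishing on $\CS^\circ$, and then conclude from the one-dimensionality of that space, which is spanned by $\oZ^{\frac12}$. Your formulation in terms of the coinvariant quotient $((\CS/\CS^\circ)\widehat\otimes\sigma^\vee)_{G_1}$ and the projection $p_1$ is just a repackaging of the same ingredients.
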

\begin{proof}
First assume that $\Hom_{G_1}(\CS/\CS^\circ, \sigma)\neq 0$.  By Proposition \ref{ftfl}, we know that there is an irreducible admissible smooth representation $\sigma'$ of $G$ such that
\[
  \Hom_{G\times G}(\CS/\CS^\circ, \sigma\widehat \otimes \sigma')\neq 0.
\]
Then Theorem \ref{howe} implies that $\sigma'\cong \sigma^\vee$. Therefore, there is a nonzero element of  $\Hom_{G\times G}(\CS, \sigma\widehat \otimes \sigma^\vee)$ which vanishes on $\CS^\circ$. Since
$\dim \Hom_{G\times G}(\CS, \sigma\widehat \otimes \sigma^\vee)=1$, this implies that $\oZ^{\frac{1}{2}}|_{\CS^\circ}=0$.  If $\Hom_{G_2}(\CS/\CS^\circ, \sigma^\vee)\neq 0$, a similar proof
shows that $\oZ^{\frac{1}{2}}|_{\CS^\circ}=0$.
\end{proof}

\begin{lemt}\label{l2}
Parts (a), (b), (c) and (d) of Theorem \ref{pole1} are equivalent to each other.
\end{lemt}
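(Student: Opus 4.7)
The plan is to first establish the equivalence $(a)\Leftrightarrow (d)$ using Theorem~\ref{howe} and Lemma~\ref{gjzetap}, then to obtain $(a)\Rightarrow (b)$ and $(a)\Rightarrow (c)$ as contrapositives of Lemma~\ref{l1}, and finally to prove the reverse implications by an evaluation construction that converts a nonzero $G\times G$-intertwiner into a nonzero $G_1$- or $G_2$-intertwiner.

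To show $(a)\Leftrightarrow (d)$, I would observe that Theorem~\ref{howe} makes the space $\Hom_{G\times G}(\CS,\sigma\widehat\otimes\sigma^\vee)$ one-dimensional, and Theorem~\ref{gjzeta} exhibits $\oZ^{\frac{1}{2}}$ as a generator. Under the injective restriction map
\[
\Hom_{G\times G}(\CS/\CS^\circ,\sigma\widehat\otimes\sigma^\vee)\hookrightarrow \Hom_{G\times G}(\CS,\sigma\widehat\otimes\sigma^\vee),
\]
condition $(d)$ is then equivalent to $\oZ^{\frac{1}{2}}|_{\CS^\circ}\neq 0$, which is $(a)$ by Lemma~\ref{gjzetap}. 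For the implications $(a)\Rightarrow (b)$ and $(a)\Rightarrow (c)$, one simply takes the contrapositive of Lemma~\ref{l1}, invoking Lemma~\ref{gjzetap} to translate the vanishing of $\oZ^{\frac{1}{2}}|_{\CS^\circ}$ into the failure of $(a)$.

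For the reverse direction, I would argue $\neg(a)\Rightarrow \neg(b)$ and $\neg(a)\Rightarrow \neg(c)$. Given $\neg(a)$, the equivalence $(a)\Leftrightarrow (d)$ produces a nonzero $\Phi\in \Hom_{G\times G}(\CS/\CS^\circ,\sigma\widehat\otimes\sigma^\vee)$. For each $\nu\in \sigma$, the evaluation map
\[
\mathrm{ev}_\nu\colon \sigma\widehat\otimes\sigma^\vee\to \sigma,\qquad w\otimes \mu\mapsto \langle \mu,\nu\rangle\, w,
\]
is $G_1$-equivariant, since $G_1$ acts on $\sigma\widehat\otimes\sigma^\vee$ only through the first factor while the target $\sigma$ carries its standard $G_1\cong G$-action. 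Hence $\mathrm{ev}_\nu\circ\Phi$ lies in $\Hom_{G_1}(\CS/\CS^\circ,\sigma)$. Picking $\phi$ with $\Phi(\phi)\neq 0$ and using nondegeneracy of the pairing $\sigma^\vee\times \sigma\to \C$, I would produce some $\nu$ with $\mathrm{ev}_\nu\circ\Phi\neq 0$, violating $(b)$. The symmetric evaluation $w\otimes\mu\mapsto \langle \lambda,w\rangle\,\mu$ for $\lambda\in\sigma^\vee$ is $G_2$-equivariant and rules out $(c)$ in exactly the same way.

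The main obstacle lies in this last step in the archimedean case: justifying that for nonzero $x\in \sigma\widehat\otimes\sigma^\vee$ one can always find $\nu\in\sigma$ with $\mathrm{ev}_\nu(x)\neq 0$. Equivalently, one needs the injectivity of the canonical map
\[
\sigma\widehat\otimes\sigma^\vee\longrightarrow \End(\sigma),\qquad w\otimes\mu\longmapsto \bigl(\nu\mapsto \langle \mu,\nu\rangle\, w\bigr),
\]
which is an elementary linear-algebra fact in the non-archimedean (algebraic tensor product) case, but in the archimedean setting must be extracted from the nuclearity properties of Casselman--Wallach representations and their completed projective tensor products.
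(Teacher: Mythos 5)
Your argument is correct and follows essentially the same route as the paper: the equivalences are driven by Lemma \ref{gjzetap}, Lemma \ref{l1}, the one-dimensionality from Theorem \ref{howe} with generator $\oZ^{\frac{1}{2}}$, and the descent from $\CS$ to $\CS/\CS^\circ$ followed by projection to the two factors. The step you single out as the main obstacle---passing from a nonzero element of $\Hom_{G\times G}(\CS/\CS^\circ,\sigma\widehat\otimes\sigma^\vee)$ to nonzero $G_1$- and $G_2$-intertwiners via evaluation maps---is precisely the implication the paper dismisses as obvious, and your appeal to nuclearity (hence the approximation property) of Casselman--Wallach representations, together with the nondegeneracy of the pairing $\sigma^\vee\times\sigma\to\C$, is the correct justification in the archimedean case.
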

\begin{proof}
If $\oZ^{\frac{1}{2}}|_{\CS^\circ}= 0$, then $\oZ^{\frac{1}{2}}$ descends to a nonzero element of  $\Hom_{G\times G}(\CS/\CS^\circ, \sigma\widehat \otimes \sigma^\vee)$. Therefore
\[
  \oZ^{\frac{1}{2}}|_{\CS^\circ}= 0\quad \Longrightarrow\quad  \Hom_{G\times G}(\CS/\CS^\circ, \sigma\widehat \otimes \sigma^\vee)\neq 0.
\]
It is obvious that
\[
   \Hom_{G\times G}(\CS/\CS^\circ, \sigma\widehat \otimes \sigma^\vee)\neq 0 \quad \Longrightarrow\quad
   \left\{ \begin{array}{l}
   \Hom_{G_1}(\CS/\CS^\circ, \sigma)\neq  0, \ \textrm{ and }\\
  \Hom_{G_2}(\CS/\CS^\circ, \sigma^\vee)\neq 0.
  \end{array}
  \right.
\]
Together with Lemma \ref{gjzetap} and Lemma \ref{l1}, this proves the lemma.
\end{proof}

\begin{lemt}\label{frob}
Let $\sigma_0$ be a smooth representation of $G$ when $\RF$ is non-archimedean, and a smooth Fr\'echet representation of $G$ of moderate growth when $\RF$ is archimedean. Then
\be\label{isof1}
   (\CS^\circ\widehat \otimes \sigma_0)_{G_1} \cong \sigma_0
\ee
as representations of $G$. 
\end{lemt}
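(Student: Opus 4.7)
The strategy is to construct an explicit integration map and verify it induces the claimed isomorphism by a change-of-variables argument that decouples the $G_1$-action from $\sigma_0$. Define
\[
T \colon \CS^\circ \widehat\otimes \sigma_0 \longrightarrow \sigma_0, \qquad T(\phi \otimes v) := \int_G \phi(g)\, \abs{\det g}_\RF^{dn/2}\, g^{-1}.v \,\od g.
\]
In the archimedean case, convergence and continuity follow from $\phi \cdot \abs{\det}^{dn/2}$ being a Schwartz function on $G$, together with the fact that the orbit map $g \mapsto g^{-1}.v$ in $\sigma_0$ is smooth and of moderate growth. Unimodularity of $G$ makes the substitutions $g \mapsto g_1 g$ and $g \mapsto g g_2^{-1}$ routine, and these verify that $T$ is $G_1$-invariant and that, under the identification $G \cong G_2$, it intertwines the $G_2$-action on $\CS^\circ \widehat\otimes \sigma_0$ with the given $G$-action on $\sigma_0$.

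Next I would establish surjectivity of $T$. In the non-archimedean case a normalized characteristic function of a compact open subgroup fixing $v$ sends $\phi \otimes v$ to $v$. In the archimedean case I would invoke the Dixmier--Malliavin theorem to write an arbitrary $v \in \sigma_0$ as a finite sum $\sum \sigma_0(\psi_i) v_i$ with $\psi_i \in C_c^\infty(G)$, and match each summand $\sigma_0(\psi)w = \int \psi(g)g.w\,\od g$ with $T(\tilde\psi \otimes w)$ via the unimodular substitution $g \mapsto g^{-1}$ and the appropriate power of $\abs{\det}_\RF$.

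The main work is injectivity of the induced map $\bar T$ on $G_1$-coinvariants. Identify $\CS^\circ \widehat\otimes \sigma_0$ with $\sigma_0$-valued Schwartz or Bruhat--Schwartz functions on $G$, and define the continuous automorphism $\Psi F(x) := x^{-1}.F(x)$, with inverse $\Psi^{-1} F(x) = x.F(x)$; smoothness and the moderate-growth hypothesis on $\sigma_0$ ensure both are well-defined and continuous. A direct computation shows
\[
\bigl(g_1.(\Psi F)\bigr)(x) = \abs{\det g_1}_\RF^{-dn/2}\,(\Psi F)(g_1^{-1} x),
\]
so conjugation by $\Psi$ converts the diagonal $G_1$-action into pure twisted left translation acting only on the $\CS^\circ$-factor. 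Consequently
\[
(\CS^\circ \widehat\otimes \sigma_0)_{G_1} \ \cong\ (\CS^\circ)_{G_1,\mathrm{twist}} \,\widehat\otimes\, \sigma_0,
\]
and everything reduces to the standard statement that $(\CS^\circ)_{G_1,\mathrm{twist}} \cong \C$ via the functional $\phi \mapsto \int_G \phi(x)\, \abs{\det x}_\RF^{dn/2}\,\od x$. Pulling this back through $\Psi$ recovers $T$, and tracking the commuting $G_2$-action through the identifications is then an easy substitution.

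The main obstacle is the last one-dimensionality statement in the archimedean case: uniqueness of the continuous $G_1$-invariant functional is essentially Haar measure twisted by a character, but one must also verify that the Fr\'echet coinvariant space is Hausdorff, i.e., that every Schwartz function with vanishing twisted integral lies in the closed span of $G_1$-coboundaries. This is handled by a Dixmier--Malliavin factorization applied to the twisted regular representation of $G$ on $\CS^\circ$, after which all remaining steps are formal.
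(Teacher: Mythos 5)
Your route is sound and genuinely different from the paper's. The paper works with the measure model $\mathcal D^\circ=\CS^\circ\od\! g$, gets surjectivity of the action map from Dixmier--Malliavin and the Open Mapping Theorem, and then proves injectivity by a duality argument: it identifies the dual of the coinvariant space with $\Hom_{G_2}(\sigma_0,(\mathcal D^\circ)^*)$, shows (again via Dixmier--Malliavin and convolution smoothing) that any such intertwiner takes values in smooth functions, and checks that evaluation at $1\in G$ inverts the natural map $\sigma_0^*\to\Hom_{G_2}(\sigma_0,(\mathcal D^\circ)^*)$, with continuity supplied by Treves' separate-continuity theorem. You instead untwist with $\Psi F(x)=x^{-1}.F(x)$ (your equivariance computations are correct, and this is exactly where the smoothness and moderate-growth hypotheses on $\sigma_0$ are needed), so that the diagonal $G_1$-action becomes a twisted left translation on the $\CS^\circ$-factor alone, reducing everything to the scalar statement that the twisted coinvariants of $\CS^\circ$ are one-dimensional, spanned by the twisted Haar functional. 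That reduction is attractive: it isolates the analytic content in a classical statement about the group algebra, independent of $\sigma_0$, whereas the paper's argument stays at the $\sigma_0$-valued level but avoids having to discuss closures of coboundary spaces explicitly. Note also that your ``Consequently'' step (commuting the coinvariant functor with $\widehat\otimes\,\sigma_0$) is harmless here precisely because the scalar coinvariant space is one-dimensional: once the closed span $N$ of twisted coboundaries is the kernel of the twisted integral, it is a closed complemented hyperplane, so $\CS^\circ\widehat\otimes\sigma_0=(N\widehat\otimes\sigma_0)\oplus\sigma_0$ and no general exactness of $\widehat\otimes$ is required; but you should say this, since coinvariants do not commute with $\widehat\otimes$ for free.

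The one place where your plan as written does not yet close is the step you yourself flag as the main obstacle. A Dixmier--Malliavin factorization of the twisted regular representation, by itself, does not show that every Schwartz function with vanishing twisted integral lies in the closed span of coboundaries: writing $\phi=\sum_i\pi(\psi_i)\phi_i$ only expresses the class of $\phi$ as a combination of classes of other functions, and gives no control of the quotient. What does work is either (i) the dual argument: a continuous functional on $\CS^\circ$ killing all twisted coboundaries is a twisted-translation-invariant tempered distribution, hence a multiple of the twisted Haar functional (proved by convolving with an approximate identity, or by the classical uniqueness of invariant distributions), and then Hahn--Banach identifies $N$ with the kernel of the twisted integral; or (ii) a direct approximate-identity argument: for $\psi_n\in C_c^\infty(G)$ with $\int\psi_n=1$ and shrinking supports, one has $\phi*\psi_n-(\int\phi\,|\det|_\RF^{dn/2}\od\! g)\,\psi_n\in N$ as a convergent $\CS^\circ$-valued integral of coboundaries, and $\phi*\psi_n\to\phi$ in $\CS^\circ$, so vanishing of the twisted integral forces $\phi\in N$. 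Either of these is standard and completes your proof; just replace the appeal to Dixmier--Malliavin at this point by one of them (Dixmier--Malliavin remains the right tool for surjectivity, as in the paper).
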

\begin{proof}
We prove the lemma in the archimedean case by assuming that $\RF$ is archimedean. The non-archimedean case is similar but less involved, and we omit its proof. 
Write $\mathcal D^\circ:=\CS^\circ \od\! g$, which is a topological vector space of measures on $G$. It is a representation of $G\times G$ such that $(g,h)\in G\times G$ acts on it 
by the push-forward of measures through the translation map
\[
  G\rightarrow G,\quad x\mapsto gxh^{-1}. 
\]
Using the topological linear isomorphism 
\[
  \CS^\circ\rightarrow \mathcal D^\circ, \quad \phi\mapsto \check \phi \cdot \abs{\det}_\RF^{-\frac{dn}{2}}\cdot \od\! g, \qquad (\check \phi(g):=\phi(g^{-1}))
\]
we know that \eqref{isof1} is equivalent to
\be\label{isof2}
   (\mathcal D^\circ\widehat \otimes \sigma_0)_{G_2} \cong \sigma_0.
\ee

The bilinear map
\be\label{actsigma0}
  \mathcal D^\circ\times  \sigma_0\rightarrow \sigma_0, \quad (\phi \od\! g, v)\mapsto (\phi \od\! g).v:= \int_G \phi(g) g.v \od \! g
\ee
is continuous and yields a $G$-homomorphism
\be\label{isof3}
    (\mathcal D^\circ\widehat \otimes \sigma_0)_{G_2} \rightarrow  \sigma_0.
\ee
The theorem of Dixmier-Malliavin \cite[Theorem 3.3]{DM} implies that the map \eqref{isof3} is surjective. It is thus an open map by the Open Mapping Theorem. In order to show that the map \eqref{isof3} is an isomorphism,  it suffices to show that its  transpose is a linear isomorphism. This transpose map is the composition of
\begin{eqnarray*}
 \sigma_0^*&\rightarrow  &\Hom_{G_2}(\sigma_0, (\mathcal D^\circ)^*)\\
 &\cong& \Hom_{G_2}(\mathcal D^\circ\widehat \otimes \sigma_0, \C)  \qquad \quad \textrm{\cite[Formula (50.16)]{Tr}}\\
 &\cong & ((\mathcal D^\circ\widehat \otimes \sigma_0)_{G_2})^*, 
\end{eqnarray*}
where the first homomorphism is given by
\be\label{firstmap}
  \lambda\mapsto  (v\mapsto(\eta\mapsto \lambda(\eta. v))).
\ee

By definition, $(\mathcal D^\circ)^*$ is the space of tempered generalized functions on $G$. Let $\nu\in \Hom_{G_2}(\sigma_0, (\mathcal D^\circ)^*)$. Since the convolution of a   tempered generalized function on $G$ with an element of $\mathcal D^\circ$ is a smooth function, using the theorem of Dixmier-Malliavin, we know that $\nu(v)$ is a smooth function on $G$ for each $v\in \sigma_0$. Let $\lambda_\nu(v)\in \C$ be its evaluation at  $1\in G$. Then $\lambda_\nu$ is a linear functional on $\sigma_0$. It is easy to check that the diagram
\be\label{cd1}
 \begin{CD}
          \mathcal D^\circ\times  \sigma_0@>\textrm{the map \eqref{actsigma0}}>>   \sigma_0\\
            @V\textrm{(identity map)}\times \nu VV          @VV\lambda_\nu V\\
      \mathcal D^\circ\times  (\mathcal D^\circ)^*  @ >\textrm{the natural paring}>>  \C\\
  \end{CD}
\ee
commutes. Note that the bottom horizontal arrow is separately continuous. Thus the composition of 
\[
  \mathcal D^\circ\times  \sigma_0\xrightarrow{\textrm{the map \eqref{actsigma0}}} \sigma_0\xrightarrow{\lambda_\nu} \C
\]
is separately continuous, which is automatically continuous by \cite[Corollary of Theorem 34.1]{Tr}. This implies that $\lambda_\nu$ is continuous. Using the commutative diagram \eqref{cd1}, it is routine to check that the map
\[
 \Hom_{G_2}(\sigma_0, (\mathcal D^\circ)^*)\rightarrow \sigma_0^*,\qquad \nu\mapsto \lambda_\nu
\]
is inverse to the map \eqref{firstmap}. Therefore the map  \eqref{firstmap} is bijective. This finishes the proof of the lemma. 
\end{proof}

\begin{rremark}
The proof of the above lemma shows that the isomorphism \eqref{isof2} holds when $G$ is replaced by an arbitrary totally disconnected locally compact Hausdorff topological group, or an arbitrary almost linear Nash group. See \cite{Sun} for the notion of almost linear Nash groups, and \cite[Sections 2.2, 2.3]{Sun2} for the notion of smooth representations of moderate growth for almost linear Nash groups. 
 \end{rremark}

\begin{lemt}\label{irrf}
If $\Hom_{G_1}(\CS/\CS^\circ, \sigma)= 0$, then the representation  $\Theta_1(\sigma)$ of $G$ is irreducible.
\end{lemt}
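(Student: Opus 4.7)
The plan is to show that $\Theta_1(\sigma)\cong \sigma^\vee$ under the hypothesis, which is manifestly irreducible. The natural candidate for such an isomorphism comes from the inclusion $\CS^\circ\hookrightarrow \CS$ combined with Lemma \ref{frob}.

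First I would apply the functor $(-\widehat\otimes \sigma^\vee)_{G_1}$ to the short exact sequence
\[
0\to \CS^\circ\to \CS\to \CS/\CS^\circ\to 0.
\]
Since completed tensor products with nuclear Fr\'echet spaces (archimedean case) or algebraic tensor products (non-archimedean case) preserve short exactness, and (Hausdorff) coinvariants are right exact on the appropriate topological category, I obtain an exact sequence
\[
(\CS^\circ \widehat\otimes \sigma^\vee)_{G_1}\to \Theta_1(\sigma)\to (\CS/\CS^\circ \widehat\otimes \sigma^\vee)_{G_1}\to 0.
\]
By Lemma \ref{frob} applied to $\sigma_0=\sigma^\vee$, the first term is canonically identified with $\sigma^\vee$.

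Next I would show the cokernel vanishes by a duality argument. A nonzero element of $(\CS/\CS^\circ \widehat\otimes \sigma^\vee)_{G_1}$ would yield, via Hahn--Banach in the archimedean case (or directly in the non-archimedean case), a nonzero continuous $G_1$-invariant functional on $\CS/\CS^\circ \widehat\otimes \sigma^\vee$. By tensor-hom adjunction (in the style of the proof of Lemma \ref{frob}), this is a nonzero $G_1$-equivariant map $\CS/\CS^\circ\to (\sigma^\vee)^*$ into the (continuous) dual. Smoothness of $\CS/\CS^\circ$ as a $G_1$-representation forces the image into the smooth vectors of $(\sigma^\vee)^*$, which by admissibility of $\sigma$ (Casselman--Wallach theory in the archimedean case) are exactly $\sigma$. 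This contradicts the hypothesis $\Hom_{G_1}(\CS/\CS^\circ,\sigma)=0$, so the cokernel must be zero.

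With the cokernel zero, the composite $\sigma^\vee\to \Theta_1(\sigma)$ is surjective; it must be nonzero because $\Theta_1(\sigma)$ is nonzero (it has $\sigma^\vee$ as its unique irreducible quotient). Since $\sigma^\vee$ is irreducible, the surjection is an isomorphism, so $\Theta_1(\sigma)\cong \sigma^\vee$ is irreducible. The main obstacle I expect is the topological bookkeeping in the archimedean case: verifying right-exactness of $(-\widehat\otimes \sigma^\vee)_{G_1}$ on the relevant Fr\'echet category, and the adjunction identifying $((\CS/\CS^\circ \widehat\otimes \sigma^\vee)_{G_1})^*$ with $\Hom_{G_1}(\CS/\CS^\circ,\sigma)$. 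Both should follow from the same ingredients used in Lemma \ref{frob}---Dixmier--Malliavin and the tensor-hom formula from \cite[Formula (50.16)]{Tr}---together with the nuclearity of the Schwartz spaces involved.
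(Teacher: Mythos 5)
Your proposal is correct and follows essentially the same route as the paper: apply $(-\widehat\otimes\sigma^\vee)_{G_1}$ to the short exact sequence $0\to\CS^\circ\to\CS\to\CS/\CS^\circ\to 0$, use right exactness and the hypothesis to kill the third term, identify $(\CS^\circ\widehat\otimes\sigma^\vee)_{G_1}\cong\sigma^\vee$ via Lemma \ref{frob}, and conclude from the nonvanishing of $\Theta_1(\sigma)$ that the resulting surjection from the irreducible $\sigma^\vee$ is an isomorphism. The only difference is that you spell out, via the duality/adjunction argument, why $\Hom_{G_1}(\CS/\CS^\circ,\sigma)=0$ forces $((\CS/\CS^\circ)\widehat\otimes\sigma^\vee)_{G_1}=0$, a step the paper asserts without detail.
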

\begin{proof}
The exact sequence
\[
0\rightarrow \CS^\circ \rightarrow \CS \rightarrow \CS/\CS^\circ\rightarrow 0
\]
yields an  exact sequence
\[
 (\CS^\circ\widehat \otimes \sigma^\vee)_{G_1} \rightarrow (\CS\widehat \otimes \sigma^\vee)_{G_1}  \rightarrow  ((\CS/\CS^\circ)\widehat \otimes \sigma^\vee)_{G_1} \rightarrow 0.
\]
The assumption of the lemma implies that $ ((\CS/\CS^\circ)\widehat \otimes \sigma^\vee)_{G_1}=0$. Thus we have a surjective homomorphism
\[
 (\CS^\circ\widehat \otimes \sigma^\vee)_{G_1} \rightarrow \Theta_1(\sigma)
 \]
of representations of $G$. By Lemma \ref{frob}, 
\[
   (\CS^\circ\widehat \otimes \sigma^\vee)_{G_1} \cong \sigma^\vee.
\]
Since $\Theta_1(\sigma)$ is nonzero, we conclude that  $\Theta_1(\sigma)\cong \sigma^\vee$ is irreducible.
\end{proof}

A similar argument as in the  proof of Lemma \ref{irrf} shows the following lemma.
\begin{lemt}\label{irrf2}
If $\Hom_{G_2}(\CS/\CS^\circ, \sigma^\vee)= 0$, then the representation  $\Theta_2(\sigma^\vee)$ of $G$ is irreducible.
\end{lemt}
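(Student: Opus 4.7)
The plan is to mirror the proof of Lemma~\ref{irrf} step by step, interchanging the roles of $G_1$ and $G_2$ and swapping $\sigma$ with $\sigma^\vee$. Under this swap the hypothesis transforms as in the statement, and the target representation $\Theta_1(\sigma)=(\CS\widehat\otimes\sigma^\vee)_{G_1}$ is replaced by $(\CS\widehat\otimes(\sigma^\vee)^\vee)_{G_2}=\Theta_2(\sigma^\vee)$, using $\sigma^{\vee\vee}\cong\sigma$.

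Concretely, I would first apply the right-exact functor $(\,\cdot\,\widehat\otimes\sigma)_{G_2}$ to the short exact sequence $0\to\CS^\circ\to\CS\to\CS/\CS^\circ\to 0$, obtaining
\[
(\CS^\circ\widehat\otimes\sigma)_{G_2}\;\longrightarrow\;\Theta_2(\sigma^\vee)\;\longrightarrow\;((\CS/\CS^\circ)\widehat\otimes\sigma)_{G_2}\;\longrightarrow\;0.
\]
Then, as in Lemma~\ref{irrf}, I would use the hypothesis $\Hom_{G_2}(\CS/\CS^\circ,\sigma^\vee)=0$ to force the vanishing of the rightmost term; this is the exact analog of the corresponding implication in the proof of Lemma~\ref{irrf}, with $G_1\leftrightarrow G_2$ and $\sigma\leftrightarrow\sigma^\vee$, and no new argument is needed.

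Next, I would invoke the $G_2$-analog of Lemma~\ref{frob}, namely $(\CS^\circ\widehat\otimes\sigma_0)_{G_2}\cong\sigma_0$ as representations of $G$, for any smooth representation $\sigma_0$ (Fr\'echet of moderate growth in the archimedean case). Its proof is obtained from that of Lemma~\ref{frob} by replacing the topological identification $\CS^\circ\cong\mathcal D^\circ$ with one that transports the $G_2$-action on $\CS^\circ$ to the left-regular action on $\mathcal D^\circ$; the Dixmier-Malliavin theorem and the transpose/separate-continuity argument then apply verbatim. Substituting $\sigma_0=\sigma$ turns the previous surjection into an epimorphism $\sigma\twoheadrightarrow\Theta_2(\sigma^\vee)$. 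Since $\Theta_2(\sigma^\vee)$ is nonzero (it has a unique irreducible quotient isomorphic to $(\sigma^\vee)^\vee\cong\sigma$) and $\sigma$ is irreducible, this map must be an isomorphism, proving $\Theta_2(\sigma^\vee)\cong\sigma$ is irreducible.

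The only point deserving care is the transfer of Lemma~\ref{frob} from $G_1$ to $G_2$: one must verify that the two regular actions of $G$ on $\mathcal D^\circ$ are genuinely symmetric for the purposes of that proof. This is routine---the left and right translations commute, the Dixmier-Malliavin factorization is left-right symmetric, and the convolution smoothing argument applies to either factor---so no real obstacle arises. One could alternatively deduce the result from Lemma~\ref{irrf} applied to $\sigma^\vee$ by invoking Proposition~\ref{theta12} to identify $\Theta_2(\sigma^\vee)$ with $\Theta_1(\sigma^\vee)$ and using the equivalence (b)$\iff$(c) of Theorem~\ref{pole1} to translate the hypothesis, but the direct mirror argument above is cleaner.
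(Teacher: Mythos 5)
Your main argument is exactly the paper's intended proof: the paper disposes of this lemma with the single sentence ``a similar argument as in the proof of Lemma \ref{irrf}'', and your mirrored exact sequence together with the $G_2$-analogue of Lemma \ref{frob} (which indeed follows from the same Dixmier--Malliavin argument, or directly from the isomorphism \eqref{isof2} established inside that proof) is precisely that argument, carried out correctly. One caveat: the alternative route you sketch at the end does not work as stated, because the equivalence (b)$\iff$(c) of Theorem \ref{pole1} applied to $\sigma$ converts the hypothesis $\Hom_{G_2}(\CS/\CS^\circ,\sigma^\vee)=0$ into $\Hom_{G_1}(\CS/\CS^\circ,\sigma)=0$, whereas Lemma \ref{irrf} applied to $\sigma^\vee$ would require $\Hom_{G_1}(\CS/\CS^\circ,\sigma^\vee)=0$, i.e.\ that $\oL(s,\sigma^\vee)$ (not $\oL(s,\sigma)$) has no pole at $s=\tfrac12$; since this is offered only as an aside, your proof is unaffected.
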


Combining Lemmas \ref{l2}, \ref{irrf} and  \ref{irrf2}, we finish the proof of Theorem \ref{pole1}.

\section{A proof of Proposition \ref{theta12}}\label{secf}

\begin{lemt}\label{fourier}
There is a (topological in the archimedean case) linear automorphism $\CF: \CS\rightarrow \CS$ such that
\[
  \CF((g,h).\phi)=(h,g).(\CF(\phi)) \quad \textrm{for all } g,h\in G, \phi\in \CS.
\]
\end{lemt}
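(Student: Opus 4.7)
The plan is to realize $\CF$ as a Fourier transform on $\oM_n(\RD)$, built from the reduced trace pairing. This is a standard construction; the real content of the lemma is matching it with the twisted $G\times G$-action in \eqref{actgg}.

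First, let $\tau:\RD\rightarrow \RF$ be the reduced trace, and define the nondegenerate symmetric $\RF$-bilinear pairing
\[
  \la\cdot ,\cdot \ra : \oM_n(\RD)\times \oM_n(\RD)\rightarrow \RF,\qquad \la x,y\ra:=\tau(\tr(xy)),
\]
where $\tr$ is the usual matrix trace of elements of $\oM_n(\RD)$. Nondegeneracy follows from that of $\tau$ on $\RD$ together with the standard fact that $\tr(xy)$ is a nondegenerate pairing on $\oM_n(\RD)$. Fix a nontrivial additive character $\psi$ of $\RF$ and a self-dual Haar measure $\od\!x$ on $\oM_n(\RD)$ with respect to $\psi\circ\la\cdot,\cdot\ra$. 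Define
\[
  \CF(\phi)(y):=\int_{\oM_n(\RD)} \phi(x)\,\psi(\la x,y\ra)\,\od\!x, \qquad \phi\in\CS,\,y\in\oM_n(\RD).
\]
By standard Fourier analysis on the finite-dimensional $\RF$-vector space $\oM_n(\RD)$, the operator $\CF$ is a topological (respectively, linear) automorphism of $\CS$ in the archimedean (respectively, non-archimedean) case.

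It remains to verify the intertwining identity. I would simply compute $\CF((g,h).\phi)(y)$ by substitution. Using the change of variables $x = gu h^{-1}$ on $\oM_n(\RD)$, the $\RF$-linear Jacobian of left multiplication by $g$ on $\oM_n(\RD)$ equals $\abs{\det(g)}_\RF^{dn}$ (since left multiplication acts identically on each of the $n$ columns, each of which has $\RF$-dimension $dn$, contributing $\abs{\det(g)}_\RF^d$ per column), and similarly for right multiplication by $h^{-1}$. Combining this with the trace identity $\tr(guh^{-1}y)=\tr(u\,h^{-1}yg)$, the prefactors collapse to
\[
  \abs{\det(g^{-1}h)}_\RF^{\frac{dn}{2}}\cdot \abs{\det(gh^{-1})}_\RF^{dn}= \abs{\det(h^{-1}g)}_\RF^{\frac{dn}{2}},
\]
and the integral becomes $\CF(\phi)(h^{-1}yg)$. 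Reading off \eqref{actgg} for the pair $(h,g)$ then gives exactly $((h,g).\CF(\phi))(y)$, which is the desired identity.

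The only technical point to be careful about is the Jacobian computation for the substitution $x\mapsto gxh^{-1}$ on $\oM_n(\RD)$; once one accepts that this contributes $\abs{\det(gh^{-1})}_\RF^{dn}$, everything else is a formal manipulation with the trace and the prefactors in \eqref{actgg}. I do not foresee a genuine obstacle, since the bookkeeping is dictated by the choice of the twist $\abs{\det(g^{-1}h)}_\RF^{dn/2}$ in the definition of the $G\times G$-action, which was made precisely so that the Fourier transform swaps the two $G$-factors.
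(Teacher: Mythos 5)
Your proposal is correct and takes essentially the same route as the paper: the paper defines exactly this Fourier transform via the reduced trace pairing on $\oM_n(\RD)$ and leaves the equivariance as a routine check, which you carry out explicitly (and correctly, including the Jacobian $\abs{\det(gh^{-1})}_\RF^{dn}$ for $x\mapsto gxh^{-1}$). The only blemish is the parenthetical dimension count: each column $\RD^n$ has $\RF$-dimension $nd^2$, not $dn$; nevertheless the per-column determinant contribution $\abs{\det(g)}_\RF^{d}$ and hence the total Jacobian you use are right, so the argument stands.
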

\begin{proof}
Define a symmetric bilinear form
\[
 \la\,,\,\ra:  \oM_{n}(\RD)\times  \oM_{n}(\RD)\rightarrow \RF, \quad (x,y)\mapsto \textrm{the reduced trace of $xy$}.
\]
Fix a non-trivial unitary character $\psi$ on $\RF$. Define the Fourier transform $\CF: \CS\rightarrow \CS$  by
\[
  (\CF(\phi))(x):=\int_{ \oM_{n}(\RD)} \phi(y)\psi(\la x,y\ra) \od\! y,\quad \phi\in \CS, x\in  \oM_{n}(\RD),
\]
where $\od\! y$ is a Haar measure on $\oM_{n}(\RD)$. It is routine to check that $\CF$ fulfills the requirement of the lemma.
\end{proof}

Lemma \ref{fourier} clearly implies Proposition \ref{theta12}, namely
\[
   \Theta_1(\sigma)\cong\Theta_2(\sigma).
\]

\section{A proof of Proposition \ref{generic}}\label{sec18}

We first treat the case of essentially square integrable representations. Recall that an irreducible admissible  smooth  representation of $\GL_n(\RF)$ is said to be essentially square integrable if all its matrix coefficients are square integrable when restricted to $\SL_n(\RF)$. Note that essentially square integrable representations of $\GL_n(\C)$ exist only when $n=1$, and  essentially square integrable representations of $\GL_n(\R)$ exist only when $n\leq 2$.

\begin{lemt}\label{gl1r}
Proposition \ref{generic} holds when
$G=\GL_1(\R)$.
\end{lemt}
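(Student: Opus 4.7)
The plan is to verify this by direct computation, since $\GL_1(\R) = \R^\times$ is small enough that every irreducible admissible smooth representation is a quasi-character and the Godement--Jacquet L-function is just the classical Tate/Gamma factor.

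First I would enumerate the irreducible representations of $\R^\times$. Every such $\sigma$ has the form
\[
  \sigma(x) = |x|_\R^{s_0}\, \sgn(x)^\epsilon, \qquad s_0\in\C,\ \epsilon\in\{0,1\},
\]
and is automatically generic (genericity is vacuous in the $n=1$ case). Its contragredient is $\sigma^\vee(x) = |x|_\R^{-s_0}\sgn(x)^\epsilon$.

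Next I would recall the explicit formula for the local L-factor:
\[
  \oL(s,\sigma) \;=\; \pi^{-(s+s_0+\epsilon)/2}\,\Gamma\!\left(\tfrac{s+s_0+\epsilon}{2}\right),\qquad
  \oL(s,\sigma^\vee) \;=\; \pi^{-(s-s_0+\epsilon)/2}\,\Gamma\!\left(\tfrac{s-s_0+\epsilon}{2}\right).
\]
The $\pi$-factor is entire and nonvanishing, so poles come entirely from the $\Gamma$-function, which has simple poles exactly at non-positive integers.

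Then I would extract the pole conditions at $s=1/2$. The L-function $\oL(s,\sigma)$ has a pole at $s=1/2$ exactly when
\[
  \tfrac{1}{2}+s_0+\epsilon \in -2\Z_{\geq 0},
\]
i.e.\ $s_0 \in -\tfrac{1}{2}-\epsilon - 2\Z_{\geq 0}$. Similarly $\oL(s,\sigma^\vee)$ has a pole at $s=1/2$ exactly when $s_0 \in \tfrac{1}{2}+\epsilon + 2\Z_{\geq 0}$. For each fixed $\epsilon\in\{0,1\}$ these two subsets of $\C$ are disjoint (one lies in a left half-line of the real axis, the other in a right half-line, and they do not meet), so they cannot both be satisfied. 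This gives the proposition for $G=\GL_1(\R)$.

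No step is really an obstacle here; the only thing to be careful about is matching the normalization of the Godement--Jacquet L-factor with the shift by $(dn-1)/2 = 0$ appearing in the zeta integral for $n=d=1$, so that the critical line $s=1/2$ is indeed the correct value to test, consistent with the definition used in Theorem~\ref{gjzeta} and Lemma~\ref{gjzetap}.
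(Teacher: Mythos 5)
Your proposal is correct and follows essentially the same route as the paper: write the character as $\sgn(x)^{\epsilon}|x|^{s_0}$, use the explicit Gamma-factor formula for $\oL(s,\sigma)$ and $\oL(s,\sigma^\vee)$, and observe that the two pole conditions at $s=\tfrac12$ are incompatible (the paper phrases this by adding the two conditions to force $m<0$, you phrase it as disjointness of the two half-lines of admissible $s_0$, which is the same computation).
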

\begin{proof}
The representation $\sigma$ corresponds to a character of $\R^\times$ of the form
\be\label{cgl1r}
  x\mapsto \chi_{m, r}(x):=\left(\frac{x}{\abs{x}}\right)^m \abs{x}^{r} ,
\ee
where $m\in\{0,1\}$ and $r\in \C$.  Then (\cf \cite[Section 16]{Ja})
\[
  \oL(s,\sigma)=\pi^{\frac{-(s+m+r)}{2}}\Gamma(\frac{s+m+r}{2}),
\]
and
\[
  \oL(s,\sigma^\vee)=\pi^{\frac{-(s+m-r)}{2}}\Gamma(\frac{s+m-r}{2}).
\]
Recall that the poles of the gamma function are $0,-1,-2, -3, \cdots$. Thus, if both $\oL(s,\sigma)$ and  $\oL(s,\sigma^\vee)$ have a pole at $\frac{1}{2}$, then
\[
  \frac{1}{2}+m+r, \frac{1}{2}+m-r\in\{0,-2,-4,-6,\cdots\}.
\]
This implies that $m<0$, which contradicts to the fact that $m\in \{0,1\}$.
\end{proof}

\begin{lemt}\label{comlex}
Proposition \ref{generic} holds when
$G=\GL_1(\C)$.
\end{lemt}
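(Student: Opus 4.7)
The plan is to follow the pattern of Lemma \ref{gl1r} verbatim, with the appropriate adjustments for the archimedean place $\mathbb{C}$ in place of $\mathbb{R}$. Since $G = \GL_1(\mathbb{C})$ is abelian, every irreducible admissible smooth representation is generic and is a character of $\mathbb{C}^\times$. Such a character can be parametrized as
\[
z \mapsto \chi_{m,r}(z) := \left(\frac{z}{\abs{z}}\right)^m \abs{z}^{2r},
\]
where $m \in \mathbb{Z}$ and $r \in \mathbb{C}$, and $\abs{\,\cdot\,}$ denotes the ordinary (not normalized) absolute value on $\mathbb{C}$. The dual representation $\sigma^\vee$ corresponds to $\chi_{-m,-r}$.

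Next I would quote the standard formula (\cf \cite[Section 16]{Ja}) for the Godement-Jacquet L-function attached to $\chi_{m,r}$:
\[
\oL(s,\sigma) = 2(2\pi)^{-\left(s + r + \frac{\abs{m}}{2}\right)}\,\Gamma\!\left(s + r + \tfrac{\abs{m}}{2}\right),
\]
and therefore
\[
\oL(s,\sigma^\vee) = 2(2\pi)^{-\left(s - r + \frac{\abs{m}}{2}\right)}\,\Gamma\!\left(s - r + \tfrac{\abs{m}}{2}\right).
\]
If both L-functions had a pole at $s = 1/2$, then the arguments of both gamma factors would lie in $\{0,-1,-2,-3,\ldots\}$. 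Adding the two constraints yields $1 + \abs{m} \in \{0,-1,-2,\ldots\}$, which is impossible since $\abs{m} \ge 0$. Hence at least one of $\oL(s,\sigma)$ and $\oL(s,\sigma^\vee)$ is holomorphic at $s = 1/2$, proving the lemma.

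There is essentially no obstacle: the argument is purely a bookkeeping exercise on gamma-factor poles, identical in spirit to Lemma \ref{gl1r}. The only place where care is needed is choosing the parametrization of characters of $\mathbb{C}^\times$ consistently with the gamma-factor formula being quoted, so that the exponent of the gamma factor comes out to $s + r + \abs{m}/2$ (rather than, say, $s + r/2 + \abs{m}/4$, as can occur under different normalizations of $\abs{\,\cdot\,}_{\mathbb{C}}$).
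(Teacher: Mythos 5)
Your proposal is correct and follows essentially the same route as the paper: parametrize the character as $\chi_{m,r}$ (your $(z/\abs{z})^m\abs{z}^{2r}$ agrees with the paper's $z^m(z\bar z)^{r-\frac{m}{2}}$), quote the gamma-factor formula from \cite[Section 16]{Ja}, and derive a contradiction by adding the two pole conditions, exactly as in the paper's reduction to the argument of Lemma \ref{gl1r}. No gaps.
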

\begin{proof}
The representation $\sigma$ corresponds to a character of $\C^\times$ of the form
\be\label{cmr}
  z\mapsto \chi_{m, r}(z):=z^m(z\bar z)^{r-\frac{m}{2}} ,
\ee
where $m\in \Z$ and $r\in \C$.  Then  (\cf \cite[Section 16]{Ja})
\[
  \oL(s,\sigma)=2(2\pi)^{-s-r-\frac{\abs{m}}{2}}\Gamma(s+r+\frac{\abs{m}}{2}),
\]
and
\[
  \oL(s,\sigma^\vee)=2(2\pi)^{-s+r-\frac{\abs{m}}{2}}\Gamma(s-r+\frac{\abs{m}}{2}).
\]
The lemma then follows as in the proof of Lemma \ref{gl1r}.
\end{proof}

\begin{lemt}\label{gl2}
Proposition \ref{generic} holds when $G=\GL_2(\R)$ and
$\sigma$ is essentially square integrable.
\end{lemt}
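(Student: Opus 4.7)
The plan is to imitate the proofs of Lemmas \ref{gl1r} and \ref{comlex}: write down the Godement--Jacquet L-function of $\sigma$ explicitly, read off the poles of $\oL(s,\sigma)$ and $\oL(s,\sigma^\vee)$, and obtain a numerical contradiction if both are assumed to have a pole at $s=\tfrac{1}{2}$.

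First I would recall the classification of essentially square integrable representations of $\GL_2(\R)$. Up to twist by $\abs{\det}^r$ with $r\in\C$, these are the discrete series $\sigma_k$ attached (via local Langlands) to the two-dimensional representation of the Weil group $W_\R$ induced from the character $\chi_{k,0}: z\mapsto (z/\abs{z})^k$ of $\C^\times=W_\C$, where $k$ is a positive integer. Thus $\sigma$ has the form $\sigma_k\otimes \abs{\det}^r$ for some $k\geq 1$ and $r\in \C$.

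Next I would record the L-factors. Since the L-parameter of $\sigma_k\otimes\abs{\det}^r$ is induced from $\chi_{k,r}$ on $\C^\times$, inductivity of L-factors together with the formula used in Lemma \ref{comlex} gives
\[
  \oL(s,\sigma)=2(2\pi)^{-(s+r+k/2)}\Gamma\!\left(s+r+\frac{k}{2}\right),
\]
and symmetrically
\[
  \oL(s,\sigma^\vee)=2(2\pi)^{-(s-r+k/2)}\Gamma\!\left(s-r+\frac{k}{2}\right).
\]
(This can alternatively be read off from \cite[Section 16]{Ja}.)

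With the L-factors in hand, the argument concludes exactly as before. If both $\oL(s,\sigma)$ and $\oL(s,\sigma^\vee)$ had a pole at $s=\tfrac{1}{2}$, then both $\tfrac{1}{2}+r+\tfrac{k}{2}$ and $\tfrac{1}{2}-r+\tfrac{k}{2}$ would lie in $\{0,-1,-2,\ldots\}$; adding these two numbers yields $1+k\leq 0$, contradicting $k\geq 1$. Hence at least one of $\oL(s,\sigma),\oL(s,\sigma^\vee)$ is holomorphic at $s=\tfrac{1}{2}$, which is what we want.

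The only non-routine input is the identification of the L-factor of $\sigma_k\otimes\abs{\det}^r$ with the induced one from $\chi_{k,r}$; once one has that, the proof is a one-line inequality. I do not expect any genuine obstacle here, but care must be taken about the normalization of the twist $r$ (whether one uses $\chi_{k,r}$ or $\chi_{k,r-k/2}$ in the parametrization of the inducing character), and to ensure consistency with the convention used in Lemmas \ref{gl1r} and \ref{comlex}.
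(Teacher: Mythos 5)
Your proposal is correct and follows essentially the same route as the paper: identify $\sigma$ (via local Langlands) with the representation induced from a character $\chi_{m,r}$ of $\C^\times$ with $m\neq 0$, use inductivity of L-factors so that $\oL(s,\sigma)$ and $\oL(s,\sigma^\vee)$ coincide with the $\GL_1(\C)$ factors, and derive the same additive contradiction with the pole condition at $s=\tfrac12$. The only cosmetic difference is that you inline the Gamma-factor computation instead of citing Lemma \ref{comlex}, and your normalization ($\sigma_k\otimes\abs{\det}^r\leftrightarrow\chi_{k,r}$) is consistent with the paper's \eqref{cmr}.
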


\begin{proof}
Under the local Langlands correspondence, the representation $\sigma$ corresponds to a representation of the Weil group $W_\R$ of $\R$ of the form $\Ind_{\C^\times}^{W_\R} \chi_{m,r}$, where $\chi_{m,r}$ is as in \eqref{cmr} with $m\neq 0$. Then
(\cf \cite[Section 16]{Ja})
\[
  \oL(s, \sigma)=\oL(s, \chi_{m,r})\quad \textrm{and}\quad \oL(s, \sigma^\vee)=\oL(s,\chi_{m,r}^{-1}),
\]
 and the lemma follows by Lemma \ref{comlex}.
 \end{proof}

Given an admissible smooth representation $\sigma_i$ of $\GL_{n_i}(\RF)$ for each $i=1,2,\cdots, \ell$ ($\ell\geq 1$, $n_i\geq 1$), let $\sigma_1\dot\times \sigma_2\dot\times \cdots \dot \times \sigma_\ell$ denote the normalized smooth induction
\[
  \Ind_{\RP_{n_1, n_2, \cdots, n_\ell}(\RF)}^{\GL_{n_1+n_2+\cdots+n_\ell}(\RF)} (\sigma_1\widehat\otimes \sigma_2\widehat\otimes \cdots \widehat\otimes \sigma_\ell),
\]
where $\RP_{n_1, n_2, \cdots, n_\ell}(\RF)$ denotes the  block-wise upper triangular  parabolic subgroup of $\GL_{n_1+n_2+\cdots+n_\ell}(\RF)$  which has $\GL_{n_1}(\RF)\times \GL_{n_2}(\RF)\times\cdots \times   \GL_{n_\ell}(\RF)$ as a Levi factor, and $\sigma_1\widehat\otimes \sigma_2\widehat\otimes \cdots \widehat\otimes \sigma_\ell$ is viewed as a representation of $\RP_{n_1, n_2, \cdots, n_\ell}(\RF)$ as usual.

Assume that $\RF$ is non-archimedean for the moment. Let $\tau$ be a supercuspidal irreducible admissible smooth representation of  $\GL_m(\RF)$, where $m$ is a positive divisor of $n$.  Suppose $n=\ell m$. Then the representation
\[
(\tau\cdot \abs{\det}_\RF^{1-\ell})\dot \times (\tau\cdot \abs{\det}_\RF^{2-\ell})\dot \times\dots \dot\times  \tau
\]
 has a unique irreducible quotient representation, which we denote by $\sigma_{n,\tau}$. It is an essentially square integrable irreducible admissible smooth representation of $\GL_n(\RF)$. Conversely, every such representation is uniquely of the form $\sigma_{n,\tau}$.  See \cite{BZ, Ze} for more details.

\begin{lemt}\label{sqip}
Assume that $\RF$ is non-archimedean, and $\sigma=\sigma_{n,\tau}$ is essentially square integrable as above. If $\oL(s, \sigma)$ has a pole at $\frac{1}{2}$, then $m=1$ and $\tau$ is the character $\abs{\,\cdot\, }^{-\frac{1}{2}}$ of $\GL_1(\RF)$.
\end{lemt}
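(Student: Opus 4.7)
My plan is to compute $\oL(s,\sigma_{n,\tau})$ in closed form and then read off the constraint imposed by a pole at $s=1/2$. The starting point is the identity
\[
  \oL(s,\sigma_{n,\tau})=\oL(s,\tau),
\]
valid in the paper's convention, where $\sigma_{n,\tau}$ is the unique irreducible quotient of $(\tau\abs{\det}_\RF^{1-\ell})\dot\times\cdots\dot\times\tau$. I would cite this from \cite{Ja} (or from the Zelevinsky classification \cite{Ze}); conceptually, it reflects that the Weil--Deligne parameter of $\sigma_{n,\tau}$ is $\phi_\tau\boxtimes\mathrm{Sp}_\ell$, whose kernel of the nilpotent operator $N$ is a one-dimensional twist which, under the present convention, picks out $\phi_\tau$ itself (corresponding to the rightmost entry $\tau$ of the segment).

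Granted this identity, I would invoke the standard fact that for a supercuspidal $\tau$ of $\GL_m(\RF)$ with $m\geq 2$, one has $\oL(s,\tau)\equiv 1$, since the Langlands parameter of $\tau$ is an irreducible Weil-group representation of dimension $\geq 2$ and hence contributes a trivial Euler factor. In particular $\oL(s,\tau)$ has no poles. Consequently a pole of $\oL(s,\sigma_{n,\tau})$ at $s=1/2$ forces $m=1$.

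With $m=1$, $\tau$ is a character of $\RF^\times$, and
\[
  \oL(s,\tau)=\begin{cases}(1-\tau(\varpi)q^{-s})^{-1},&\text{if }\tau\text{ is unramified,}\\ 1,&\text{if }\tau\text{ is ramified,}\end{cases}
\]
where $\varpi$ is a uniformizer of $\RF$ and $q$ the residue cardinality. A pole at $s=1/2$ therefore requires $\tau$ unramified with $\tau(\varpi)=q^{1/2}$, which uniquely identifies $\tau=\abs{\,\cdot\,}_\RF^{-1/2}$, as desired. The only step with any real content is the L-function identity $\oL(s,\sigma_{n,\tau})=\oL(s,\tau)$; once it is accepted, the remainder is a direct inspection of when a $\GL_1$ or supercuspidal Euler factor has a pole at a prescribed point, so the main ``obstacle'' is really just pinning down this identity (and checking that the paper's convention for $\sigma_{n,\tau}$ really does place $\tau$ at the rightmost end of the segment, so that the $N$-kernel shift is zero and not $\pm(\ell-1)/2$).
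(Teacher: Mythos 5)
Your proposal is correct and follows essentially the same route as the paper: the whole content is the identity $\oL(s,\sigma_{n,\tau})=\oL(s,\tau)$ (which the paper cites from \cite[Theorem 8.2]{JPSS}), after which the triviality of the supercuspidal factor for $m\geq 2$ and the explicit $\GL_1$ Euler factor force $m=1$ and $\tau=\abs{\,\cdot\,}_\RF^{-1/2}$. Your check that the paper's segment ends at $\tau$, so the $N$-kernel shift vanishes, is exactly the point that makes the cited identity hold in this normalization.
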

\begin{proof}
The lemma follows by noting that (\cf \cite[Theorem 8.2]{JPSS})
\[
  \oL(s, \sigma_{n,\tau})=\oL(s, \tau).
\]

\end{proof}

By Lemma \ref{sqip}, $\sigma_{n, \abs{\,\cdot\, }^{-\frac{1}{2}}}$ is the only  essentially square integrable irreducible admissible smooth representation of $\GL_n(\RF)$ whose L-function has a pole at $\frac{1}{2}$. Since  the representation $\sigma_{n, \abs{\,\cdot\, }^{-\frac{1}{2}}}$ is not self-dual, we conclude that Proposition \ref{generic} holds when
$\RF$ is non-arhimedean and $\sigma$ is essentially square integrable. Together with Lemmas \ref{gl1r}, \ref{comlex} and \ref{gl2}, this implies the following lemma.

\begin{lemt}\label{nal2}
Proposition \ref{generic} holds when
 $\sigma$ is essentially square integrable.
\end{lemt}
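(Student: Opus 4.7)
The plan is to reduce the lemma to the cases already handled, by enumerating the (few) possibilities for an essentially square integrable $\sigma$. Recall that essentially square integrable representations of $\GL_n(\C)$ exist only for $n=1$, and those of $\GL_n(\R)$ only for $n\leq 2$; this is stated at the top of Section \ref{sec18}.

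First I would dispose of the archimedean case by a direct case split. If $\RF=\C$, then $n=1$ and the conclusion is exactly Lemma \ref{comlex}. If $\RF=\R$ and $n=1$, it is Lemma \ref{gl1r}. If $\RF=\R$ and $n=2$, then an essentially square integrable $\sigma$ is a discrete series representation, which is generic, and the conclusion is Lemma \ref{gl2}. So the archimedean part of Proposition \ref{generic} for essentially square integrable $\sigma$ is just an assembly of these three lemmas.

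For the non-archimedean case, I would use the Bernstein--Zelevinsky classification recalled before Lemma \ref{sqip}: every essentially square integrable irreducible admissible smooth representation of $\GL_n(\RF)$ is of the form $\sigma_{n,\tau}$ for a unique supercuspidal $\tau$ of some $\GL_m(\RF)$ with $m\mid n$. Suppose, for contradiction, that both $\oL(s,\sigma)$ and $\oL(s,\sigma^\vee)$ have a pole at $s=\frac{1}{2}$. Writing $\sigma=\sigma_{n,\tau}$, Lemma \ref{sqip} applied to $\sigma$ forces $m=1$ and $\tau=\abs{\,\cdot\,}^{-\frac{1}{2}}$, so $\sigma=\sigma_{n,\abs{\,\cdot\,}^{-1/2}}$. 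Taking contragredients commutes with the construction $\sigma_{n,\tau}\mapsto \sigma_{n,\tau^\vee}$ (a standard fact from \cite{Ze}), so $\sigma^\vee=\sigma_{n,\abs{\,\cdot\,}^{1/2}}$. Lemma \ref{sqip} applied to $\sigma^\vee$ then forces $\abs{\,\cdot\,}^{1/2}=\abs{\,\cdot\,}^{-1/2}$, which is absurd. Hence at most one of $\oL(s,\sigma)$, $\oL(s,\sigma^\vee)$ can have a pole at $\frac{1}{2}$, as required.

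The only real content in the above is the input from Lemma \ref{sqip}, which has already been proved; the rest is bookkeeping. The one minor point to be careful about is the compatibility $(\sigma_{n,\tau})^\vee\cong \sigma_{n,\tau^\vee}$, which I would either cite from the Zelevinsky classification or verify directly by noting that dualizing the segment $\{\tau\abs{\det}^{1-\ell},\ldots,\tau\}$ reverses it to the segment attached to $\tau^\vee$. There is no serious obstacle here; this lemma is essentially a corollary of the four preceding lemmas.
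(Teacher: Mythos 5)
Your route is the same as the paper's: the archimedean cases are disposed of by Lemmas \ref{gl1r}, \ref{comlex} and \ref{gl2}, and the non-archimedean case by Lemma \ref{sqip} plus a duality argument. The one step that is wrong as stated is the asserted compatibility $(\sigma_{n,\tau})^\vee\cong\sigma_{n,\tau^\vee}$. In the paper's normalization $\sigma_{n,\tau}$ is the unique irreducible quotient of $(\tau\abs{\det}_\RF^{1-\ell})\dot\times\cdots\dot\times\tau$, i.e.\ it is indexed by the \emph{top} end of the segment; dualizing the segment element-wise gives $\{\tau^\vee,\tau^\vee\abs{\det}_\RF,\ldots,\tau^\vee\abs{\det}_\RF^{\ell-1}\}$, whose top end is $\tau^\vee\abs{\det}_\RF^{\ell-1}$, so the correct statement is $(\sigma_{n,\tau})^\vee\cong\sigma_{n,\,\tau^\vee\abs{\det}_\RF^{\ell-1}}$. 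Your formula already fails for the Steinberg representation $\mathrm{St}_n=\sigma_{n,\abs{\,\cdot\,}^{(n-1)/2}}$ with $n\geq 2$: it is self-dual although $\tau^\vee\neq\tau$, and for $n=2$ your formula would identify $\mathrm{St}_2^{\vee}$ with $\sigma_{2,\abs{\,\cdot\,}^{-1/2}}=\mathrm{St}_2\cdot\abs{\det}_\RF^{-1}$, which is false.

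This slip does not sink the argument, because the contradiction survives the correction: if both $\oL(s,\sigma)$ and $\oL(s,\sigma^\vee)$ had a pole at $s=\tfrac12$, Lemma \ref{sqip} gives $\sigma\cong\sigma_{n,\abs{\,\cdot\,}^{-1/2}}$, hence $\sigma^\vee\cong\sigma_{n,\abs{\,\cdot\,}^{n-1/2}}$, and applying Lemma \ref{sqip} to $\sigma^\vee$ forces $n-\tfrac12=-\tfrac12$, i.e.\ $n=0$, absurd. The paper phrases the same step more robustly: both applications of Lemma \ref{sqip} identify $\sigma$ and $\sigma^\vee$ with $\sigma_{n,\abs{\,\cdot\,}^{-1/2}}$, so $\sigma$ would be self-dual, whereas $\sigma_{n,\abs{\,\cdot\,}^{-1/2}}$ is not self-dual (its central character is non-unitary). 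Either repair is a one-line fix; just do not cite $(\sigma_{n,\tau})^\vee\cong\sigma_{n,\tau^\vee}$ in this normalization.
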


Now $\RF$ is archimedean or non-archimedean, as in Lemma \ref{nal2}. Recall that a unitary representation of $\GL_n(\RF)$ is said to be tempered if it is weakly contained in the regular representation  (see \cite{CHH}), and an irreducible admissible smooth representation $\rho$ of $\GL_n(\RF)$ is said to be  essentially tempered  if there is a real number $e(\rho)$ such that $\rho\cdot \abs{\det}_\RF^{-e(\rho)}$ is unitarizable and tempered. Note that the real number $e(\rho)$ is uniquely determined by $\rho$. It is evident that all essentially square integrable irreducible admissible smooth representations  of $\GL_n(F)$ are essentially tempered.
The following lemma is well-known and easy to check.  See \cite[Theorem 1.1]{HO} for a more general statement.

\begin{lemt}\label{rs}
Let $\sigma_i$ be an irreducible admissible smooth representation of $\GL_{n_i}(\RF)$ which is unitarizable and tempered ($i=1,2$, $n_i\geq 1$).
 Then the Rankin-Selberg L-function $\oL(s,\sigma_1\times \sigma_2)$ has no pole in the domain where the real part of $s$ is positive.
\end{lemt}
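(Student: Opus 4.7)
The plan is to translate the statement via the local Langlands correspondence into a property of bounded Weil(-Deligne) parameters, then verify it by explicit inspection of the $L$-factor formula. Under the local Langlands correspondence for $\GL_n(\RF)$ (Langlands in the archimedean case; Harris-Taylor and Henniart in the non-archimedean case), $\sigma_i$ corresponds to a semisimple $n_i$-dimensional parameter $\phi_i$ of the Weil (or Weil-Deligne) group of $\RF$. The unitarity and temperedness of $\sigma_i$ is equivalent to $\phi_i$ having bounded image when restricted to the Weil group. Moreover, by the compatibility of local Langlands with Rankin-Selberg factors (Shahidi in the non-archimedean case, Jacquet-Shalika in the archimedean case), one has $\oL(s, \sigma_1 \times \sigma_2) = L(s, \phi_1 \otimes \phi_2)$.

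Since the tensor product of two bounded Weil-group representations is again bounded, and $L$-factors are multiplicative on semisimple decompositions, it suffices to check that $L(s, \rho)$ has no poles in $\Re(s) > 0$ for any irreducible bounded parameter $\rho$. In the non-archimedean case, $L(s, \rho)$ is either trivial or of the form $(1 - \alpha q^{-s - c})^{-1}$ with $\abs{\alpha} = 1$ and $c \ge 0$, whose single line of poles lies on $\Re(s) = -c \le 0$. In the archimedean case, bounded irreducible parameters are essentially the characters $\chi_{m, r}$ of Lemmas \ref{gl1r} and \ref{comlex} with $r$ purely imaginary (or their inductions from $\C^\times$ as in Lemma \ref{gl2}), and the gamma-function $L$-factors have all poles in $\Re(s) \le 0$.

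The main obstacle is keeping track of the non-negative shift $c$ introduced in the non-archimedean case by the monodromy part of a Steinberg-type parameter; one must verify that this shift goes in the correct direction, which reflects precisely the fact that the associated square-integrable representation on $\GL_k$ is built from a twist by $\abs{\det}^{-(k-1)/2}$. An alternative and more direct route, avoiding local Langlands entirely, is to invoke absolute convergence of the JPSS Rankin-Selberg zeta integrals in the half-plane $\Re(s) > 0$ for tempered generic representations---and every tempered irreducible smooth representation of $\GL_n(\RF)$ is generic---so that the local $L$-function, defined as the gcd of these zeta integrals, inherits holomorphicity in the same half-plane.
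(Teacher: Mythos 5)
Your proposal is correct, but it takes a different route from the paper, which offers no argument at all for this lemma: it simply declares it well-known and points to \cite[Theorem 1.1]{HO} (the tempered L-function conjecture of Heiermann--Opdam) for a more general statement. Your first route---transferring to bounded Weil(--Deligne) parameters via the local Langlands correspondence, using that a tensor product of bounded parameters is bounded, and inspecting each irreducible constituent $\tau\otimes\mathrm{Sp}(k)$, whose factor $L(s+\frac{k-1}{2},\tau)$ has poles only on $\Re(s)=-\frac{k-1}{2}\leq 0$ (resp.\ gamma factors $\Gamma$ with arguments shifted by a nonnegative real part in the archimedean case)---is a complete and standard verification; the only quibble is attribution: the compatibility of Rankin--Selberg factors of pairs with the correspondence is built into the characterization of the non-archimedean LLC (Henniart) and into the archimedean definition of the factors via parameters (with Jacquet proving agreement with the integral-theoretic definition), rather than being Shahidi's theorem per se. Your second route, via absolute convergence of the JPSS zeta integrals for $\Re(s)>0$ when both representations are tempered (and generic, which tempered irreducible representations of $\GL_n(\RF)$ are), also works, but note that holomorphy of the individual integrals is not by itself enough: one must use that the $L$-factor is attained---in the non-archimedean case $\oL(s,\sigma_1\times\sigma_2)$ is a finite combination $\sum_i P_i(q^{\pm s})\,Z_i(s)$ by the gcd/principal-ideal property, and in the archimedean case one needs Jacquet's non-vanishing theorem (for every $s_0$ some ratio $Z/\oL$ is nonzero at $s_0$) to convert holomorphy of the $Z$'s into holomorphy of $\oL$. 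With those standard inputs made explicit, either of your arguments furnishes a self-contained proof where the paper relies on a citation, at the cost of invoking either the full force of LLC or the finer analytic theory of the local Rankin--Selberg integrals.
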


To prove Proposition \ref{generic} in the general case, we need the following result.

\begin{prpt}\label{shahidi}
Let $\sigma_1, \sigma_2$ be essentially tempered  irreducible admissible smooth representations of $\GL_{n_1}(\RF)$ and $\GL_{n_2}(\RF)$ ($n_1,n_2\geq 1$), respectively. Then the Rankin-Selberg L-function $\oL(s,\sigma_1^\vee\times \sigma_2)$ has a pole at $s=1$ if and only if  $e(\sigma_1)\geq e(\sigma_2)$ and $\sigma_1\dot \times \sigma_2$ is reducible.
\end{prpt}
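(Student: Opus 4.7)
My strategy is to reduce the question to a pairwise statement about essentially square integrable representations, where it becomes a direct consequence of the Zelevinsky/Vogan classification and the multiplicativity of Rankin--Selberg L-functions. Write $\sigma_i = \tau_i \cdot \abs{\det}_\RF^{e_i}$ with $\tau_i$ unitary tempered and $e_i := e(\sigma_i)$; a standard shift identity then gives
\[
  \oL(s,\sigma_1^\vee\times\sigma_2) \;=\; \oL\bigl(s-(e_1-e_2),\,\tau_1^\vee\times\tau_2\bigr).
\]

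The two cases $e_1\le e_2$ are handled immediately. If $e_1<e_2$, the right-hand side is evaluated at a point of real part exceeding $1$ and is therefore holomorphic by Lemma~\ref{rs}, so the L-function has no pole at $s=1$; the condition $e_1\ge e_2$ also fails, so both sides of the biconditional are false. If $e_1=e_2$, the evaluation point is $s=1$ and again the L-function is holomorphic by Lemma~\ref{rs}; moreover $\sigma_1\dot\times\sigma_2 \cong (\tau_1\dot\times\tau_2)\cdot\abs{\det}_\RF^{e_1}$ is irreducible because parabolic induction of unitary tempered representations of $\GL_n$ always has trivial $R$-group, via Bernstein--Zelevinsky in the non-archimedean setting and Vogan in the archimedean setting.

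The substantive case is $e_1>e_2$. I decompose $\tau_i = \delta_{i,1}\dot\times\cdots\dot\times\delta_{i,k_i}$ into unitary discrete series and observe that any two segments attached to the $\delta_{i,j}\cdot\abs{\det}_\RF^{e_i}$ with the same index $i$ are centered at the common point $e_i$, hence are nested when on a common cuspidal orbit, and so never linked. Consequently, by the Bernstein--Zelevinsky theorem on $\GL_n$ (and its archimedean analogue), $\sigma_1\dot\times\sigma_2$ is reducible iff some ``cross'' induction $(\delta_{1,a}\cdot\abs{\det}_\RF^{e_1})\dot\times(\delta_{2,b}\cdot\abs{\det}_\RF^{e_2})$ is reducible. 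In parallel, multiplicativity of Rankin--Selberg L-functions yields
\[
  \oL(s,\sigma_1^\vee\times\sigma_2)=\prod_{a,b}\oL\bigl(s,(\delta_{1,a}\cdot\abs{\det}_\RF^{e_1})^\vee\times(\delta_{2,b}\cdot\abs{\det}_\RF^{e_2})\bigr),
\]
so a pole at $s=1$ occurs iff some cross-factor does. This reduces the problem to proving, for essentially square integrable $\rho_1,\rho_2$ with $e(\rho_1)>e(\rho_2)$, that $\rho_1\dot\times\rho_2$ is reducible iff $\oL(s,\rho_1^\vee\times\rho_2)$ has a pole at $s=1$.

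For this reduced statement I would argue by direct computation. In the non-archimedean case, write $\rho_i = \sigma_{n_i,\omega_i}$ with $\omega_i$ a suitable twist of a unitary supercuspidal; Zelevinsky's segment linkage criterion characterizes reducibility, and the product formula of Jacquet--Piatetski-Shapiro--Shalika rewrites $\oL(s,\rho_1^\vee\times\rho_2)$ as a product of L-factors of $\omega_1^\vee\times\omega_2$ at shifted arguments, whose pole at $s=1$ encodes precisely the same numerical condition. In the archimedean case the local Langlands correspondence identifies $\rho_i$ with an explicit one- or two-dimensional Weil group parameter, and both conditions reduce to a comparison of exponents via the gamma-function pole computations of Lemmas~\ref{gl1r}--\ref{gl2}. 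The main obstacle will be this final matching: one must carefully track the normalization of segments, the $\min(n_1,n_2)$ factors in the JPSS product formula, and the precise correspondence between Zelevinsky linkage and L-factor poles so that the distinguished point $s=1$ lines up exactly on both sides. Conceptually the argument is clean, but it is in this bookkeeping that the real work lies.
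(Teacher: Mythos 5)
Your reduction steps are fine as far as they go: the shift identity, the use of Lemma~\ref{rs} to kill the cases $e(\sigma_1)\le e(\sigma_2)$ (together with irreducibility of unitary tempered induction when $e(\sigma_1)=e(\sigma_2)$), the observation that same-centre segments are never linked, and the multiplicativity of the Rankin--Selberg factors all hold, so the problem is correctly reduced to the pairwise claim: for essentially square integrable $\rho_1,\rho_2$ with $e(\rho_1)>e(\rho_2)$, the induced representation $\rho_1\dot\times\rho_2$ is reducible if and only if $\oL(s,\rho_1^\vee\times\rho_2)$ has a pole at $s=1$. But this pairwise claim is precisely the substance of the proposition, and your proposal does not prove it --- it only announces a ``direct computation'' and concedes that the matching of normalizations is where ``the real work lies.'' In the non-archimedean case the computation does work out (the JPSS product formula for a pair of segments and Zelevinsky's linkage criterion produce the same numerical window for $e(\rho_1)-e(\rho_2)$), but none of it is carried out. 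In the archimedean case the gap is worse: Lemmas~\ref{gl1r}--\ref{gl2} of the paper compute poles of \emph{standard} $\oL$-functions at $s=1/2$ for explicit characters and say nothing about when $\chi_1\dot\times\chi_2$, $D\dot\times\chi$ or $D_1\dot\times D_2$ is reducible over $\R$ or $\C$, so they cannot supply the reducibility side of the equivalence; you would need the archimedean reducibility criteria for products of essentially discrete series (Speh/Vogan, or the archimedean Langlands classification), which you neither state nor cite. Likewise, the ``archimedean analogue'' of the Bernstein--Zelevinsky pairwise irreducibility criterion, which your reduction uses, is true but nontrivial and is invoked without justification.

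For comparison, the paper's own proof is two lines: Lemma~\ref{rs} disposes of $e(\sigma_1)<e(\sigma_2)$, and for $e(\sigma_1)\ge e(\sigma_2)$ the statement is quoted as an instance of Casselman--Shahidi \cite[Proposition 5.3]{CS}. So everything you have deferred to ``bookkeeping'' is exactly the content of the result the paper cites; re-proving it by explicit segment/parameter computations is a legitimate alternative route, but as written the proposal is a plan rather than a proof, and the archimedean half of the plan rests on inputs that are missing.
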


\begin{proof}
Lemma \ref{rs} implies that if $e(\sigma_1)<e(\sigma_2)$ then $\oL(s,\sigma_1^\vee\times \sigma_2)$ has no pole at $s=1$.
Thus we may assume that $e(\sigma_1)\geq e(\sigma_2)$, and then the proposition  is an instance of  \cite[Proposition 5.3]{CS}.
\end{proof}

Now   we come  to the proof of  Proposition \ref{generic}. As in Proposition \ref{generic}, let $\sigma$ be a generic  irreducible admissible smooth representation of $\GL_n(\RF)$.
Write
\[
\sigma\cong \sigma_1\dot\times \sigma_2\dot\times \cdots \dot \times\sigma_\ell\quad(\ell\geq 1),
\]
where $\sigma_i$ ($i=1,2, \cdots, \ell$) is an essentially square integrable irreducible admissible smooth representation of $\GL_{n_i}(\RF)$ ($n_i\geq 1$),  with $n_1+n_2+\cdots n_\ell=n$.
Then
\[
  \oL(s, \sigma)=\prod_{j=1}^\ell \oL(s, \sigma_j)\quad\textrm{and}\quad  \oL(s, \sigma^\vee)=\prod_{j=1}^\ell \oL(s, \sigma_j^\vee).
\]

Assume by contradiction that both $\oL(s, \sigma)$ and $\oL(s, \sigma^\vee)$ have a pole at $s=\frac{1}{2}$. Using Lemma \ref{nal2}, we know that both $\oL(s, \sigma_i)$ and $\oL(s, \sigma_j^\vee)$ have a pole at $s=\frac{1}{2}$, for some $i\neq j$.
 Proposition \ref{pat1} (which will be proved in Section \ref{secl}) then implies that  $\oL(s, \sigma_j^\vee\times \sigma_i)$ has a pole at $s=1$. Hence by Proposition \ref{shahidi},   $\sigma_j\dot\times \sigma_i$ is reducible, which contradicts the fact that $\sigma$ is irreducible. This proves  Proposition \ref{generic}.

\section{A proof of Proposition \ref{pat1}}\label{secl}
Let $\sigma_1, \sigma_2$ be as in  Proposition \ref{pat1} so that both $\oL(s,\sigma_1)$ and  $\oL(s,\sigma_2)$ have a pole at $s=\frac{1}{2}$.  We are aimed to show that  $\oL(s,\sigma_1\times \sigma_2)$ has a pole at $s=1$. Using the Langlands classification for general linear groups, we assume without loss of generality that both $\sigma_1$ and $\sigma_2$ are essentially square integrable.
We further assume without loss of generality that $n_1\geq n_2$.

\begin{lemt}\label{l51}
Assume that $\RF$ is non-archimedean. Then $\oL(s,\sigma_1\times \sigma_2)$ has a pole at $s=1$.
\end{lemt}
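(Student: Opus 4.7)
The plan is first to pin down the structure of $\sigma_1$ and $\sigma_2$ using Lemma \ref{sqip}. Since both $\oL(s,\sigma_1)$ and $\oL(s,\sigma_2)$ have poles at $s=\frac{1}{2}$ and each $\sigma_i$ is essentially square integrable, Lemma \ref{sqip} forces $\sigma_i=\sigma_{n_i,\tau}$ with $\tau=\abs{\,\cdot\,}^{-1/2}$, the character of $\GL_1(\RF)$. Hence each $\sigma_i$ is a twist of the Steinberg representation of $\GL_{n_i}(\RF)$, and in particular its Weil--Deligne parameter under the local Langlands correspondence is
\[
  \phi_i \;=\; \abs{\,\cdot\,}^{-1/2}\otimes \nu_{n_i},
\]
where $\nu_{n_i}$ denotes the $n_i$-dimensional irreducible representation of the Deligne $\SL_2$.

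Next, I would compute $\oL(s,\sigma_1\times\sigma_2)$ by means of these parameters. The classical Clebsch--Gordan decomposition gives
\[
  \nu_{n_1}\otimes\nu_{n_2} \;=\; \bigoplus_{k=0}^{n_2-1}\nu_{n_1+n_2-1-2k},
\]
so that
\[
  \phi_1\otimes\phi_2 \;=\; \bigoplus_{k=0}^{n_2-1}\bigl(\abs{\,\cdot\,}^{-1}\otimes\nu_{n_1+n_2-1-2k}\bigr),
\]
each summand being the Langlands parameter of the essentially square integrable representation $\sigma_{n_1+n_2-1-2k,\,\abs{\,\cdot\,}^{-1}}$ of a smaller general linear group.

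Finally, applying Lemma \ref{sqip} (equivalently, the identity $\oL(s,\sigma_{n,\tau})=\oL(s,\tau)$ from \cite[Theorem 8.2]{JPSS}) to each summand, each factor contributes $\oL(s,\abs{\,\cdot\,}^{-1})=(1-q^{1-s})^{-1}$, and so
\[
  \oL(s,\sigma_1\times\sigma_2) \;=\; (1-q^{1-s})^{-n_2},
\]
which has a pole of order $n_2$ at $s=1$, as desired.

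The main obstacle is essentially normalization bookkeeping: one must verify that (i) the paper's identity $\oL(s,\sigma_{n,\tau})=\oL(s,\tau)$ is matched by the Langlands parametrization $\sigma_{n,\tau}\leftrightarrow\phi_\tau\otimes\nu_\ell$ used above, and (ii) under local Langlands for $\GL_n(\RF)$ the Rankin--Selberg L-function coincides with the L-function of the tensor product of parameters and is multiplicative over direct sums of Weil--Deligne representations. Once these compatibilities are fixed, the computation above is routine. Alternatively, one may bypass local Langlands altogether by invoking the explicit product formula of \cite[\S 8]{JPSS} for the Rankin--Selberg L-function of two essentially square integrable representations directly, which yields exactly the same factorization.
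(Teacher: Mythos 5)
Your overall strategy is the same as the paper's: reduce via Lemma \ref{sqip} to $\sigma_i\cong\sigma_{n_i,\abs{\,\cdot\,}^{-1/2}}$ and then compute $\oL(s,\sigma_1\times\sigma_2)$ explicitly (the paper simply quotes the product formula of \cite[Theorem 8.2]{JPSS}). However, your computation has two concrete normalization errors and the formula you end with is wrong. First, the Langlands parameter of $\sigma_{n_i,\abs{\,\cdot\,}^{-1/2}}$ is not $\abs{\,\cdot\,}^{-1/2}\otimes\nu_{n_i}$ (with $\nu_m$ your notation for the $m$-dimensional irreducible representation of the Deligne $\SL_2$): in the paper's convention $\sigma_{n,\tau}$ is attached to the segment $\tau\abs{\,\cdot\,}^{1-n},\dots,\tau$, whose center is $\tau\abs{\,\cdot\,}^{\frac{1-n}{2}}$, so the correct parameter is $\abs{\,\cdot\,}^{-n_i/2}\otimes\nu_{n_i}$. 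Indeed, with your parameter one would have $\oL(s,\sigma_i)=\oL\bigl(s+\tfrac{n_i-1}{2},\abs{\,\cdot\,}^{-1/2}\bigr)$, whose pole is at $s=1-\tfrac{n_i}{2}$, contradicting the hypothesis that it is at $s=\tfrac12$ unless $n_i=1$. Second, for a character $\chi$ one has $\oL(s,\chi\otimes\nu_m)=\oL\bigl(s+\tfrac{m-1}{2},\chi\bigr)$, not $\oL(s,\chi)$; equivalently, $\chi\otimes\nu_m$ is the parameter of $\sigma_{m,\chi\abs{\,\cdot\,}^{(m-1)/2}}$ rather than of $\sigma_{m,\chi}$, so the Clebsch--Gordan summands do not each contribute $\oL(s,\abs{\,\cdot\,}^{-1})$.

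When both normalizations are corrected (and assuming $n_1\geq n_2$, which your Clebsch--Gordan range implicitly uses and which the paper arranges without loss of generality), the computation gives
\[
\oL(s,\sigma_1\times\sigma_2)=\prod_{k=0}^{n_2-1}\oL\Bigl(s+\tfrac{n_1+n_2-2-2k}{2},\abs{\,\cdot\,}^{-\frac{n_1+n_2}{2}}\Bigr)=\prod_{k=0}^{n_2-1}\bigl(1-q^{\,1+k-s}\bigr)^{-1},
\]
where $q$ is the residue field cardinality of $\RF$; this agrees with the paper's $\prod_{j=0}^{n_2-1}\oL(s+j,\abs{\,\cdot\,}^{-n_2})$ and has a \emph{simple} pole at $s=1$ (together with simple poles at $s=2,\dots,n_2$), not a pole of order $n_2$. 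So your claimed answer $(1-q^{1-s})^{-n_2}$ is incorrect, and the JPSS product formula does not yield ``exactly the same factorization'' as yours. The qualitative conclusion that $s=1$ is a pole does survive the correction, so the argument is repairable, but as written the key computation fails.
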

\begin{proof}
By Lemma \ref{sqip},
\[
  \sigma_1\cong \sigma_{n_1, \abs{\,\cdot\, }^{-\frac{1}{2}}}  \quad \textrm{and}\quad  \sigma_2\cong\sigma_{n_2, \abs{\,\cdot\, }^{-\frac{1}{2}}}.
\]
Thus by \cite[Theorem 8.2]{JPSS} (see also \cite[Theorem 2.3]{CPS}),
\[
  \oL(s,\sigma_1\times \sigma_2)=\prod_{j=0}^{n_2-1} \oL(s+j, \abs{\,\cdot\,}^{-n_2}).
\]
Hence $s=1$ is a pole of $\oL(s,\sigma_1\times \sigma_2)$.

\end{proof}

\begin{lemt}\label{l52}
Assume that $\RF$ is archimedean and $n_1=n_2=1$. Then $\oL(s,\sigma_1\times \sigma_2)$ has a pole at $s=1$.
\end{lemt}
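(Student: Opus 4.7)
The plan is to reduce everything to the explicit Gamma-factor formulas recorded in the proofs of Lemmas \ref{gl1r} and \ref{comlex}. Since $n_1 = n_2 = 1$, each $\sigma_i$ is a character of $\RF^\times$, and the Rankin-Selberg L-function $\oL(s, \sigma_1 \times \sigma_2)$ coincides with the Godement-Jacquet L-function of the product character $\sigma_1 \sigma_2$. The whole argument is therefore a direct computation with Gamma factors, which I would run separately for $\RF = \R$ and $\RF = \C$.

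For $\RF = \R$, I would write $\sigma_i = \chi_{m_i, r_i}$ with $m_i \in \{0, 1\}$ and $r_i \in \C$ as in \eqref{cgl1r}. The hypothesis that $\oL(s, \sigma_i) = \pi^{-(s + m_i + r_i)/2} \Gamma((s + m_i + r_i)/2)$ has a pole at $s = 1/2$ translates, via the pole locus of $\Gamma$, to $m_i + r_i = -\tfrac{1}{2} - 2k_i$ for some integer $k_i \geq 0$. The product character equals $\chi_{M, r_1 + r_2}$ with $M \in \{0, 1\}$ congruent to $m_1 + m_2$ modulo $2$, so the Gamma factor of $\oL(s, \sigma_1 \times \sigma_2)$ evaluated at $s = 1$ has argument
\[
\frac{1 + M + r_1 + r_2}{2} \;=\; -(k_1 + k_2) - \frac{m_1 + m_2 - M}{2}.
\]
Since $m_1 + m_2 - M \in \{0, 2\}$, this is a non-positive integer, placing $s = 1$ in the pole set of $\Gamma$, which gives the desired pole of $\oL(s, \sigma_1 \times \sigma_2)$.

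For $\RF = \C$, I would write $\sigma_i = \chi_{m_i, r_i}$ as in \eqref{cmr} with $m_i \in \Z$. The analogous hypothesis becomes $r_i + |m_i|/2 = -\tfrac{1}{2} - k_i$ for some $k_i \geq 0$, and since $\sigma_1 \sigma_2 = \chi_{m_1 + m_2, r_1 + r_2}$, evaluating the relevant Gamma factor at $s = 1$ yields
\[
1 + r_1 + r_2 + \frac{|m_1 + m_2|}{2} \;=\; -(k_1 + k_2) - \frac{|m_1| + |m_2| - |m_1 + m_2|}{2}.
\]
The triangle inequality together with the parity identity $|m_i| \equiv m_i \pmod{2}$ forces the second fraction to be a non-negative integer, again putting $s = 1$ in the pole set of $\Gamma$.

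The argument is essentially bookkeeping, and I do not expect any real obstacle; the only step that requires a bit of care is the parity identity in the complex case, which guarantees that $(|m_1| + |m_2| - |m_1 + m_2|)/2 \in \Z_{\geq 0}$ rather than merely lying in $\tfrac{1}{2}\Z_{\geq 0}$.
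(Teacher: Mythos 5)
Your proposal is correct and follows essentially the same route as the paper: identify $\oL(s,\sigma_1\times\sigma_2)$ with the Godement--Jacquet L-function of the product character, translate the pole hypotheses at $s=\tfrac12$ into conditions on the parameters $(m_i,r_i)$, and check via the triangle inequality and parity that the resulting Gamma factor has a pole at $s=1$. The only difference is cosmetic: the paper writes out the complex case and dismisses the real case with ``the same proof,'' whereas you spell out the real-case bookkeeping with $M\in\{0,1\}$ explicitly.
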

\begin{proof}
Fist assume that $\RF=\C$.  Write $\sigma_1\cong\chi_{m_1,r_1}$ and $\sigma_2\cong\chi_{m_2,r_2}$ as in \eqref{cmr}.  Then
\[
\frac{\abs{m_1}+1}{2}+r_1,\, \frac{\abs{m_2}+1}{2}+r_2\in \{0,-1,-2,\cdots\}.
\]
This implies that
\[
   \frac{\abs{m_1+m_2}}{2}+1+r_1+r_2\in \{0,-1,-2,\cdots\}.
\]
Thus
\[
\oL(s,\sigma_1\times \sigma_2)=2(2\pi)^{-s-(r_1+r_2)-\frac{\abs{m_1+m_2}}{2}}\Gamma(s+r_1+r_2+\frac{\abs{m_1+m_2}}{2})
\]
has a pole at $s=1$.

When $\RF=\R$, the same proof shows that $\oL(s,\sigma_1\times \sigma_2)$ has a pole at $s=1$.
\end{proof}

\begin{lemt}\label{l53}
Assume that $\RF=\R$ and $(n_1,n_2)=(2,1)$. Then $\oL(s,\sigma_1\times \sigma_2)$ has a pole at $s=1$.
\end{lemt}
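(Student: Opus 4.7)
The plan is to mirror the proof of Lemma~\ref{l52} by using the local Langlands correspondence to reduce the $\GL_2(\R)\times \GL_1(\R)$ Rankin--Selberg L-factor to an L-factor on $\GL_1(\C)$. By the classification recalled in Lemma~\ref{gl2}, $\sigma_1$ corresponds to the Langlands parameter $\Ind_{\C^\times}^{W_\R}\chi_{m,r}$ with $m\neq 0$, while $\sigma_2=\chi_{m',r'}$ with $m'\in\{0,1\}$. Then $\sigma_1\times \sigma_2$ has Langlands parameter $(\Ind_{\C^\times}^{W_\R}\chi_{m,r})\otimes \chi_{m',r'}$, and $\oL(s,\sigma_1\times\sigma_2)$ equals the Artin L-factor of this tensor product.

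Next, by the projection formula for induced representations of $W_\R$,
\[
 (\Ind_{\C^\times}^{W_\R}\chi_{m,r})\otimes \chi_{m',r'}\;\cong\; \Ind_{\C^\times}^{W_\R}\left(\chi_{m,r}\cdot \chi_{m',r'}|_{\C^\times}\right).
\]
The restriction $\chi_{m',r'}|_{\C^\times}$ is the character $z\mapsto (z\bar z)^{r'}$: the sign integer $m'$ records the value of the extended character only on a coset representative $j\in W_\R\setminus \C^\times$, since the norm map $z\mapsto z\bar z$ lands in $\R^\times_{>0}$. Hence the character inside the induction is $\chi_{m,r+r'}$, and the formula of Lemma~\ref{comlex} yields
\[
  \oL(s,\sigma_1\times \sigma_2)\;=\;\oL(s,\chi_{m,r+r'})\;=\;2(2\pi)^{-s-(r+r')-\frac{|m|}{2}}\,\Gamma\!\left(s+r+r'+\frac{|m|}{2}\right).
\]

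Finally I would translate the two pole hypotheses. Lemma~\ref{gl2} shows that a pole of $\oL(s,\sigma_1)$ at $s=\tfrac{1}{2}$ forces $r+\tfrac{|m|}{2}+\tfrac{1}{2}\in\{0,-1,-2,\dots\}$. A quick case check of $m'=0$ versus $m'=1$ shows that the pole of $\oL(s,\sigma_2)$ at $s=\tfrac{1}{2}$ likewise forces $r'+\tfrac{1}{2}\in\{0,-1,-2,\dots\}$ in either case (even shifts for $m'=0$, odd shifts for $m'=1$, but in both cases a non-positive integer). Adding, $1+r+r'+\tfrac{|m|}{2}$ is a non-positive integer, so the Gamma factor above has a pole at $s=1$, as required. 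The one step that needs care is the identification of the restriction $\chi_{m',r'}|_{\C^\times}$ and the ensuing projection-formula manipulation on the Langlands side; after this the proof is a direct parallel of Lemma~\ref{l52}.
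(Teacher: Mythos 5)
Your proposal is correct and follows essentially the same route as the paper: reduce via the local Langlands correspondence and the projection formula to the single $\C^\times$-character $\chi_{m,r+r'}$ (you spell out the restriction step that the paper only makes explicit in the $(2,2)$ case of Lemma \ref{l54}), then add the two pole conditions to see that $1+r+r'+\frac{\abs m}{2}$ is a non-positive integer. The case check for $m'\in\{0,1\}$ and the resulting Gamma-factor pole at $s=1$ match the paper's argument exactly.
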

\begin{proof}
Under the local Langlands correspondence, the representation $\sigma_1$ corresponds to a representation of the Weil group $W_\R$ of $\R$ of the form $\Ind_{\C^\times}^{W_\R} \chi_{m_1,r_1}$, where $\chi_{m_1,r_1}$ is as in \eqref{cmr}, with $m_1\neq 0$. Write $\sigma_2\cong\chi_{m_2,r_2}$  as in \eqref{cgl1r}.
Then
\[
\frac{\abs{m_1}+1}{2}+r_1\in \{0,-1,-2,\cdots\}
\]
and
\[
\frac{1}{2}+m_2+r_2\in \{0,-2,-4,\cdots\}.
\]
This implies that
\[
\frac{\abs{m_1}}{2}+1+r_1+r_2\in \{0,-1,-2,\cdots\}
\]
Thus
\[
\oL(s,\sigma_1\times \sigma_2)=2(2\pi)^{-s-(r_1+r_2)-\frac{\abs{m_1}}{2}}\Gamma(s+r_1+r_2+\frac{\abs{m_1}}{2})
\]
has a pole at $s=1$.
\end{proof}

\begin{lemt}\label{l54}
Assume that $\RF=\R$ and $(n_1,n_2)=(2,2)$. Then $\oL(s,\sigma_1\times \sigma_2)$ has a pole at $s=1$.
\end{lemt}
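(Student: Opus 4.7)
The strategy mirrors that of Lemmas \ref{l52} and \ref{l53}: pass to Langlands parameters, decompose the tensor product on the Weil group side, and then read off the archimedean Gamma factors.

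By Lemma \ref{gl2} and its proof, each $\sigma_i$ corresponds under local Langlands to the two-dimensional Weil group representation $\Ind_{\C^\times}^{W_\R}\chi_{m_i, r_i}$ with $m_i \neq 0$, in the notation of \eqref{cmr}. Since $\oL(s, \sigma_i) = \oL(s, \chi_{m_i, r_i})$, the hypothesis that both L-functions have a pole at $s = 1/2$ translates, exactly as in the proof of Lemma \ref{comlex}, into
\[
  \tfrac{|m_i|+1}{2} + r_i \in \{0, -1, -2, \ldots\}, \qquad i = 1, 2.
\]

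Next I would use the standard tensor product decomposition for representations induced from the index-two subgroup $\C^\times \subset W_\R$. Conjugation by the nontrivial coset representative sends $\chi_{m, r}$ to $\chi_{-m, r}$, so Mackey's formula (equivalently the projection formula) gives
\[
  \Ind_{\C^\times}^{W_\R}\chi_{m_1, r_1}\otimes \Ind_{\C^\times}^{W_\R}\chi_{m_2, r_2} \cong \Ind_{\C^\times}^{W_\R}\chi_{m_1+m_2, r_1+r_2}\oplus \Ind_{\C^\times}^{W_\R}\chi_{m_1-m_2, r_1+r_2}.
\]
Combined with the identity $\oL(s, \Ind_{\C^\times}^{W_\R}\chi_{m, r}) = \oL(s, \chi_{m, r})$, valid for all $m\in\Z$ (and reduced to the Legendre duplication formula in the reducible case $m = 0$), this yields
\[
  \oL(s, \sigma_1 \times \sigma_2) = \oL(s, \chi_{m_1+m_2, r_1+r_2})\cdot \oL(s, \chi_{m_1-m_2, r_1+r_2}).
\]

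To finish, choose a sign $\epsilon \in \{+1, -1\}$ with $|m_1 + \epsilon m_2| = |m_1| + |m_2|$; such an $\epsilon$ always exists depending on whether $m_1$ and $m_2$ have the same or opposite sign. Summing the two integrality conditions above gives
\[
  1 + (r_1 + r_2) + \tfrac{|m_1 + \epsilon m_2|}{2} \;=\; 1 + (r_1 + r_2) + \tfrac{|m_1| + |m_2|}{2} \;\in\; \{0, -1, -2, \ldots\},
\]
so the Gamma factor of $\oL(s, \chi_{m_1 + \epsilon m_2, r_1 + r_2})$ has a pole at $s = 1$. Since $\Gamma$ has no zeros, the other factor $\oL(s, \chi_{m_1 - \epsilon m_2, r_1 + r_2})$ cannot cancel this pole, and the desired pole of $\oL(s, \sigma_1 \times \sigma_2)$ at $s = 1$ follows. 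The only subtlety I anticipate is bookkeeping the L-factor identity $\oL(s, \Ind \chi_{0, r}) = \oL(s, \chi_{0, r})$ in the reducible case $m_1 = \pm m_2$, which is handled uniformly by the duplication formula.
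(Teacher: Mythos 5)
Your proof is correct and follows essentially the same route as the paper: local Langlands parameters, the Mackey decomposition of $\Ind_{\C^\times}^{W_\R}\chi_{m_1,r_1}\otimes\Ind_{\C^\times}^{W_\R}\chi_{m_2,r_2}$, and the resulting factorization of $\oL(s,\sigma_1\times\sigma_2)$ into two $\GL_1(\C)$ factors. Your choice of the sign $\epsilon$ with $|m_1+\epsilon m_2|=|m_1|+|m_2|$ is a minor (and slightly cleaner) variant of the paper's step, which uses the $m_1+m_2$ summand together with the implicit parity fact that $|m_1|+|m_2|-|m_1+m_2|$ is even.
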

\begin{proof}
Under the local Langlands correspondence, the representation $\sigma_i$ ($i=1,2$) corresponds to a representation of the Weil group $W_\R$ of $\R$ of the form $\Ind_{\C^\times}^{W_\R} \chi_{m_i,r_i}$, where $\chi_{m_i,r_i}$ is as in \eqref{cmr}, with $m_i\neq 0$.
Then
\[
\frac{\abs{m_i}+1}{2}+r_i\in \{0,-1,-2,\cdots\}\quad (i=1,2),
\]
which implies that
\be\label{m1m2}
\frac{\abs{m_1+m_2}}{2}+1+r_1+r_2\in \{0,-1,-2,\cdots\}.
\ee
We have that
\begin{eqnarray*}
   &&\Ind_{\C^\times}^{W_\R} \chi_{m_1,r_1}\otimes \Ind_{\C^\times}^{W_\R} \chi_{m_2,r_2}\\
   &\cong &  \Ind_{\C^\times}^{W_\R} \left(\chi_{m_1,r_1}\otimes (\Ind_{\C^\times}^{W_\R} \chi_{m_2,r_2})|_{\C^\times} \right)\\
   &\cong &  \Ind_{\C^\times}^{W_\R} \left(\chi_{m_1,r_1}\otimes (\chi_{m_2,r_2}\oplus \chi_{-m_2,r_2})\right)\\
   &\cong& \Ind_{\C^\times}^{W_\R} \chi_{m_1+m_2,r_1+r_2}\oplus \Ind_{\C^\times}^{W_\R} \chi_{m_1-m_2,r_1+r_2}.
\end{eqnarray*}
Thus
\[
\oL(s,\sigma_1\times \sigma_2)=\oL(s,  \chi_{m_1+m_2,r_1+r_2})\cdot  \oL(s, \chi_{m_1-m_2,r_1+r_2}).
\]
It has a pole at $s=1$ by \eqref{m1m2}.
\end{proof}

Proposition \ref{pat1} is now proved by summarizing  Lemmas \ref{l51}, \ref{l52}, \ref{l53} and \ref{l54}.

\section*{Acknowledgements}

B. Sun would like to thank Jiajun Ma and Wee Teck Gan for helpful discussions.
Y. Fang was supported in part by the National Natural Science Foundation of China (No. 11601341), and the National Key Research and Development Program of China (No. 2016QY04W0805).
B. Sun was supported in part by the National Natural Science Foundation of China (No. 11525105, 11688101, 11621061 and 11531008).

\end{document}